\def\bi{\begin{itemize}}
\def\ei{\end{itemize}}
\def\beq{\begin{equation}}
\def\eeq{\end{equation}}
\def\eqnok#1{(\ref{#1})}
\def\citep#1{\cite{#1}}
\def\citet#1{\cite{#1}}
\newcommand{\RK}{\mbox{\sc RK}}
\newcommand{\ARK}{\mbox{\sc ARK}}
\newcommand{\SARK}{\mbox{\sc SARK}}
\newcommand{\CG}{\mbox{\sc CG}}
\newcommand{\ARKauto}{\mbox{\sc ARK}(\mbox{\rm auto})}
\newcommand{\SARKauto}{\mbox{\sc SARK}(\mbox{\rm auto})}
\newcommand{\lambdamin}{\lambda_{\min}}
\newcommand{\lambdamax}{\lambda_{\max}}
\newcommand{\mumin}{\lambda}
\newcommand{\diag}{\mbox{\rm diag}\,}
\newcommand{\trace}{\mbox{\rm trace}\,}
\newcommand{\R}{\mathbb{R}}
\def\rcc{\color{black}}
\newtheorem{theorem}{Theorem}[section]
\newtheorem{lemma}[theorem]{Lemma}
\theoremstyle{definition}
\theoremstyle{remark}
\numberwithin{equation}{section}
\begin{document}

\title{An Accelerated Randomized Kaczmarz Algorithm}

\author{Ji Liu}
\address{Department of Computer Sciences, University of Wisconsin-Madison, Madison, WI 53706-1685}
\curraddr{Department of Computer Sciences,
1210 W. Dayton St., Madison, WI 53706-1685}
\email{ji.liu.uwisc@gmail.com}
\thanks{The first author was supported in part by NSF Awards DMS-0914524
  and DMS-1216318 and ONR Award N00014-13-1-0129.}

\author{Stephen J. Wright}
\address{Department of Computer Sciences, University of Wisconsin-Madison, Madison, WI 53706-1685}
\email{swright@cs.wisc.edu}
\thanks{The second author was supported in part by NSF Awards DMS-0914524
  and DMS-1216318, ONR Award N00014-13-1-0129, DOE Award DE-SC0002283,
  and Subcontract 3F-30222 from Argonne National Laboratory.}

\subjclass[2010]{Primary 65F10; Secondary 68W20}

\date{\today.}


\keywords{Linear Equations, Randomized Methods, Nesterov Acceleration}

\begin{abstract}
The randomized Kaczmarz ($\RK$) algorithm is a simple but powerful
approach for solving consistent linear systems $Ax=b$. This paper
proposes an accelerated randomized Kaczmarz ($\ARK$) algorithm with
better convergence than the standard $\RK$ algorithm on ill
conditioned problems.  The per-iteration cost of $\RK$ and $\ARK$
are similar if $A$ is dense, but $\RK$ is much more able to exploit
sparsity in $A$ than is $\ARK$. To deal with the sparse case, an
efficient implementation for $\ARK$, called $\SARK$, is proposed. A
comparison of convergence rates and average per-iteration complexities
among $\RK$, $\ARK$, and $\SARK$ is given, taking into account
different levels of sparseness and conditioning. Comparisons with the
leading deterministic algorithm --- conjugate gradient applied to the
normal equations --- are also given. Finally, the analysis is
validated via computational testing.
\end{abstract}

\maketitle



\section{Introduction} \label{sec:intro}

We consider the problem of finding a solution to a consistent linear
system
\begin{equation}
Ax=b, \label{eqn_problem}
\end{equation}
where $A\in \R^{m\times n}$ and $b\in\R^{m}$. We denote the rows of
$A$ by $a_i^T$ and the elements of $b$ by $b_i$,
$i=1,2,\dotsc,m$. That is,
 \begin{equation*}
 \begin{aligned}
  A=\left[ \begin{matrix}
      a_1^T \\
      a_2^T \\
      \vdots \\
      a_m^T
   \end{matrix}\right], \quad
   b=\left[ \begin{matrix}
      b_1 \\
      b_2 \\
      \vdots \\
      b_m
   \end{matrix}\right].
      \end{aligned}
\end{equation*}
{\rcc (Our convergence results do not apply directly to inconsistent
systems.  For inconsistent systems, we can solve instead the
least-squares problem $\min_x \|Ax-b \|^2$, whose solution $x$ can be
found by solving the consistent linear system $Ax=y, A^Ty=A^Tb$.)}
Besides consistency of $Ax=b$, we assume throughout that $A$ has no
zero rows. (Such rows can be detected and eliminated in a trivial
preprocessing step.) {\rcc We assume for purposes of analysis --- though not for purposes of 
deriving and specifying the algorithms --- that the rows of $A$ are normalized:}
 \beq \label{eq:anormal} \|a_i \|_2 =
1, \;\; i=1,2,\dotsc,m.  \eeq
This assumption does not add significantly to the cost of
implementation: We could simply normalize each $a_i$ the first time it
is encountered by the algorithm. {\rcc Normalization} simplifies the analysis
in the appendix considerably, and in most cases will improve the
conditioning of the problem, leading to faster convergence. However,
in the description of algorithms in the main body of the paper, we do
not assume \eqref{eq:anormal}, and include factors $\|a_i\|^2$ as
needed. Our randomized algorithms generate the same sequence of
iterates whether or not normalization is carried out (provided, of
course, that a {\rcc corresponding} scaling is applied to $b$).

The randomized Kaczmarz ($\RK$) algorithm is an algorithm for solving
\eqnok{eqn_problem} that requires only $O(n)$ storage and has a linear
(geometric) rate of convergence. In some situations, it is even more
efficient than the conjugate gradient ($\CG$) method, which is the
most popular iterative algorithm for solving large linear systems.
At each iteration, the $\RK$ algorithm randomly selects a row $i \in
\{1,2,\dotsc,m\}$ of the linear system and does an orthogonal
projection of the current estimate vector onto the hyperplane:
\begin{equation}
x_{k+1} = x_{k} - \frac{(a_i^Tx_k-b_i)}{\|a_i\|^2}a_i.
\label{eqn:rk_step}
\end{equation}
{\rcc The $\RK$ update~\eqref{eqn:rk_step} is equivalent to one step
  of coordinate descent applied to the dual problem
\[
\min_y\quad {1\over 2}\|A^Ty\|^2 - b^Ty,
\]
(specifically, a negative gradient step in the $i$th component of $y$
with steplength $1/\|a_i\|_2^2$), where the primal variables $x$ and
duals $y$ are related through $x=A^Ty$; see \citep{Leventhal08}.}  We
denote by $i(k)$ the index selected at iteration $k$, and note that
$x_k$ depends on all the indices selected up to iteration $k$, namely,
$i(0),i(1),\dotsc,i(k-1)$.

The $\RK$ method overcomes two drawbacks of the original Kaczmarz
algorithm \citep{Kaczmarz37}. First, the original algorithm selects
rows of $A$ cyclically (not randomly) and may converge very slowly
when the data order is poor, for example, when many neighboring rows
are identical. Second, it is difficult to analyze the convergence rate
for the original Kaczmarz algorithm, whereas the expected convergence
rate of $\RK$ can be proved in a few lines.

By applying an acceleration scheme due to Nesterov to the standard
$\RK$ algorithm, we obtain an accelerated randomized Kaczmarz algorithm
($\ARK$) in Section~\ref{sec:algorithm} and show
(Section~\ref{sec:convergence}) that its linear convergence is faster
than the original method when the linear system has poor conditioning,
as measured by the minimum nonzero eigenvalue of $A^TA$. The cost per
iteration of both $\RK$ and $\ARK$ is $O(n)$ if the matrix $A$ is
dense. If $A$ is sparse, however, the calculus changes. The cost of an
iteration of $\RK$ is proportional to the number of nonzeros in $a_i$,
whereas the cost of each $\ARK$ iteration is still $O(n)$ in general.  We
therefore propose in Section~\ref{sec:equality_sparse} a scheme
called $\SARK$ in which the $\ARK$ updates are cached, to preserve
sparsity in the intermediate vectors. (In the absence of numerical
error, the iterates generated by $\ARK$ and $\SARK$ are identical.)
The average cost per iteration of $\SARK$ is $O(\sqrt{\delta}n)$,
where $\delta$ is the fraction of nonzero elements in $A$. In
Section~\ref{sec:equality_cmp_sparse}, we compare the theoretical
performance of $\RK$, $\ARK$, and $\SARK$ for different values of the
sparsity ratio $\delta$ and the minimal eigenvalue $\lambdamin$, thus
giving guidance about how to choose between these algorithms under
various scenarios. We illustrate the computational performance of the algorithm on some
random problems in Section~\ref{sec:computations}.

\subsection{Notation}

We summarize notations used in the remainder of the paper.
\begin{itemize}
\item[-] $\lambdamin$ and $\lambdamax$ are defined to be the {\rcc minimum}
  and maximum nonzero eigenvalues $A^TA$, respectively.
\item[-] $\|X\|$ is the spectral norm of the matrix, while $\|X\|_F$
  denotes the Frobenius norm.
\item[-] $X^+$ is the Moore-Penrose pseudoinverse of $X$. Denoting the
  compact singular value decomposition of $X\in \R^{m\times
    n}$ as $X=U\Sigma V^T$ where $U$ and $V$ are orthonormal matrices
  (that is, $U^TU=I$ and $V^TV=I$) and $\Sigma$ is nonsingular and
  diagonal, we have $X^+=V\Sigma^{-1}U^T$. Note that
  $\lambdamin = 1/\|(A^TA)^+\|$.
\item[-] Given a positive semidefinite matrix $M$, $\|X\|_{M}$ is
  defined as $\sqrt{\mbox{trace}(X^TMX)}$.
\item[-] Define $\mathcal{P}_{c,d}(x)$ as the {\rcc orthogonal} {\rm projection} of $x$
  onto the hyperplane given by $c^Tx=d$, that is,
\[
\mathcal{P}_{c,d}(x)=x-{c\over \|c\|^2}(c^Tx-d).
\]
\item[-] $\mathcal{P}_{A,b}(x)$ denotes the (Euclidean-norm) projection
  of $x$ onto the solution set of $Ax=b$.
\item[-] $e_j\in\R^{n}$, $j=1,2,\dotsc,n$, denotes the $j$th Euclidean
  basis vector --- a vector of $n$ zeros except for $1$ in position
  $j$.
\end{itemize}

\section{Related Work}

The Kaczmarz algorithm was proposed by Kaczmarz \citet{Kaczmarz37},
who used the cyclic projection procedure to solve consistent linear
systems $Ax=b$. He proved the convergence to the unique solution if
$A$ is a square nonsingular matrix. The cyclic ordering of the
iterates made it difficult to obtain iteration-based convergence
results, but Galantai~\citet{Galantai25} proved a linear convergence
rate in terms of cycles. {\rcc Since the 1980s, the Kaczmarz algorithm
  has found an important application area in Algebraic Reconstruction
  Techniques (ART) for image reconstruction; see for example
  \citet{Herman80} and \citet{Herman09}.}

Censor et al. \citet{Cegogo01} proposed a component averaging method
to solve~\eqref{eqn_problem}: Parallel-project the current $x$ onto
all hyperplanes and apply an average scheme on all projections to
obtain the next iterate $x$. This method is essentially a gradient
descent method for solving ${1\over 2}\|Ax-b\|^2$, and can thus handle
inconsistent systems.

Strohmer and Vershynin \citet{Strohmer09} studied the behavior of
$\RK$ in the case of a consistent system $Ax=b$ in which $A$ has full
column rank (making the solution unique).  They proved the linear
convergence rate for $\RK$ in expectation. Needell \cite{Needell10}
also assumed full column rank, but dropped the assumption of
consistency, showing that the $\RK$ algorithm converges linearly to a
ball of fixed radius centered at the solution. The radius is
proportional to the distance of $b$ from the image space of $A$.
Eldar and Needell \citet{EldarN11} presented a modified version of the
randomized Kaczmarz method which at each iteration selects the optimal
projection from a randomly chosen set. This technique improves the
convergence rate but requires more computation cost in each iteration.


Leventhal and Lewis \citet{Leventhal08} extended the $\RK$ algorithm
for consistent linear equalities $Ax=b$ to the more general setting of
consistent linear inequalities and equalities: $A_Ix\geq b_I$,
$A_Ex=b_E$. The basic idea is quite similar to the $\RK$ algorithm:
iteratively update $x_{k+1}$ by projecting $x_{k}$ onto the randomly
selected hyperplane or half space. The linear convergence rate was
proven to be $1-1/{(L^2\|A\|^2_F)}$, where $L$ is the Hoffman constant
\citep{Hoffman52} for the system $A_Ix\geq b_I$, $A_Ex=b_E$.


Zouzias and Freris \citet{Zouzias12} considered the case of possibly
inconsistent \eqnok{eqn_problem}. They proposed a randomized extended
Kaczmarz algorithm by first projecting $b$ orthogonally onto the image
space of $A$ to obtain $b_\bot$, then orthogonally projecting the
initial point $x_0$ onto the hyperplane $Ax=b_{\bot}$. Essentially,
the $\RK$ algorithm is applied twice. The convergence rate is proven
to be $1-{\lambdamin/\|A\|^2_F}$, which is the same as the
$\RK$ algorithm for consistent linear systems. This method can be
considered as a randomized variant of the extended Kaczmarz method
proposed by Popa~\cite{Popa99}.

\section{Algorithm} \label{sec:algorithm}

In this section, we review the randomized Kaczmarz algorithm ($\RK$,
Algorithm~\ref{alg_rka}) and propose an accelerated variant called
$\ARK$ (Algorithm~\ref{alg_arka}). Finally, we describe an equivalent
version of $\ARK$ that can be implemented with fewer operations
(Algorithm~\ref{alg_arka2}).

Each iteration of $\RK$ randomly selects a hyperplane $a_i^Tx=b_i$,
for some $i \in \{1,2,\dotsc,m\}$, and obtains $x_{k+1}$ by {\rcc
  orthogonally} projecting $x_k$ onto this hyperplane. As shown at the
start of Section~\ref{sec:convergence}, this algorithm guarantees
linear convergence in the expectation sense.


\begin{algorithm}                     
\caption{Randomized Kaczmarz: $x_{K+1}=\RK(A,b,x_0,K)$}          
\label{alg_rka}                           
\begin{algorithmic}[1]                    
\STATE Initialize $k \leftarrow 0$;
\WHILE{$k \leq K$}
\STATE Choose $i=i(k)$ from $\{1,2,3,\dotsc,m\}$ with equal probability;
\label{eqn_alg_rka_2}
\STATE Set $x_{k+1} \leftarrow \mathcal{P}_{a_i,b_i}(x_k)$, that is,
$x_{k+1}=x_k- a_i(a_i^Tx_k-b_i) / \|a_i\|^2$;
\label{eqn_alg_rka_3}
\STATE $k \leftarrow k+1$;
\ENDWHILE
\end{algorithmic}
\end{algorithm}

{\rcc We note again that Step~\ref{eqn_alg_rka_3} does not change if
  we omit the normalization step, that is, if $\|a_i \| \neq
  1$. However, when the rows are not normalized, our RK algorithm
  becomes inconsistent with the versions described in
  \cite{Leventhal08,Strohmer09}, which select the index $i$ in
  Step~\ref{eqn_alg_rka_2} with probability $\| a_i \|_2^2 / \|A
  \|_F^2$.  We could simulate the effects of non-normalized rows by
  defining a matrix $\bar{A}$ in which row $a_i$ is replaced by $\|a_i
  \|^2$ copies of the normalized rows $a_i/\|a_i\|$. Our
  Algorithm~\ref{alg_rka} applied to this virtual matrix $\bar{A}$
  would then be equivalent to Algorithm~1 of \cite{Strohmer09} applied
  to $A$, with the same convergence results as in that paper
(see \eqref{eqn_rk_bound},
  with $\|A\|_F^2$ replacing $m$ in the denominator of the rate
  constant).  Analysis of the accelerated algorithm to be discussed
  below could also be performed without the assumption of
  normalization, but the situation becomes considerably more complicated in this
  case. In particular, several subtle issues related to allowable
  scalings of $A$ (not dealt with in existing
  analyses of accelerated methods) must be addressed. We believe that
  any additional generality to be gained by dropping our assumptions
  of normalization and uniform probabilities is minor, and would be
  obscured by the additional complication in the analysis.  }

The $\ARK$ algorithm applies Nesterov's accelerated procedure
\citep{nesterov2004introductory} --- more familiar in the context of
gradient descent for optimization --- to the standard $\RK$
algorithm. When applied to $\min_x \, f(x)$, gradient descent sets
$x_{k+1} \leftarrow x_k-\theta_k\nabla f(x_k)$, where $\nabla f$ is
the objective gradient and $\theta_k$ is the stepsize. Nesterov's
accelerated procedure introduces two sequences $\{y_k\}$ and $\{v_k\}$
and defines the following iterative scheme:
\begin{align*}
  y_k & \leftarrow  \alpha_k v_k+(1-\alpha_k)x_k\\
  x_{k+1} & \leftarrow   y_k - \theta_k\nabla f(y_k)\\
  v_{k+1} & \leftarrow \beta_k v_k + (1-\beta_k)y_k -\gamma_k\nabla f(y_k).
\end{align*}
With appropriate {\rcc choices} of $\alpha_k$, $\beta_k$, and $\gamma_k$, this
procedure yields better convergence rates than standard gradient
descent.

If we treat the projection operation of Step~\ref{eqn_alg_rka_3} in
Algorithm~\ref{alg_rka} analogously to the gradient descent step,
we can obtain an accelerated version of the $\RK$ algorithm.
This accelerated randomized Kaczmarz ($\ARK$) procedure is detailed in
Algorithm~\ref{alg_arka}.
The scalars $\alpha_k$, $\beta_k$, and $\gamma_k$ in
Algorithm~\ref{alg_arka} are independent of the vector sequences
$\{x_k\}$, $\{y_k\}$, and $\{v_k\}$, and can be calculated offline.


\begin{algorithm}
\caption{Accelerated Randomized Kaczmarz: $x_{K+1}=\ARK(A,b,\mumin, x_0,K)$}          
\label{alg_arka}                           
\begin{algorithmic}[1]                    
\STATE {\rcc Check that $\lambda \in [0,\lambdamin]$;}
\STATE Initialize $v_0 \leftarrow x_0$, $\gamma_{-1} \leftarrow 0$, $k \leftarrow 0$;
\WHILE{$k \leq K$}
\STATE Choose $\gamma_k$ to be  the larger root of
\begin{equation} \gamma_k^2-{\gamma_k \over m}=\left(1-{\gamma_k\mumin\over m} \right) \gamma_{k-1}^2; \label{eqn_alg1}
\end{equation}
\STATE Set $\alpha_k$ and $\beta_k$ as follows:
\begin{equation} \alpha_k \leftarrow {m-\gamma_k\mumin\over \gamma_k(m^2-\mumin)},\label{eqn_alg2}
\end{equation}
\begin{equation} \beta_k \leftarrow 1-{\gamma_k\mumin\over m}; \label{eqn_alg3}\end{equation}
\STATE Set $y_k \leftarrow \alpha_kv_k+(1-\alpha_k)x_k$; \label{alg_arka_y}
\STATE Choose $i=i(k)$ from $\{1,2,3,\dotsc,m\}$ with equal probability;
\STATE Set $x_{k+1} \leftarrow \mathcal{P}_{a_i,b_i}(y_k)$, that is,
$x_{k+1} \leftarrow y_k-a_i(a_i^Ty_k-b_i) / \|a_i\|^2$;
\label{alg_arka_x}
\STATE Set
$v_{k+1} \leftarrow \beta_kv_k+(1-\beta_k)y_k-\gamma_k a_i(a_i^Ty_k-b_i)/
\|a_i\|^2$;\label{alg_arka_v}
\STATE $k \leftarrow k+1$;
\ENDWHILE
\end{algorithmic}
\end{algorithm}


We now describe the complexity of these methods for the case of dense
$A$. We make the standing assumption that the quantities $\|a_i \|^2$,
$i=1,2,\dotsc,m$ are precomputed via an initial pass through the
matrix.  The main computation in Algorithm~\ref{alg_rka} is in step
\ref{eqn_alg_rka_3}, which requires about $4n$ operations per
iteration.  The cost per iteration of Algorithm~\ref{alg_arka} is
about $11n$, incurred in steps \ref{alg_arka_y}, \ref{alg_arka_x}, and
\ref{alg_arka_v}.

Although Algorithm~\ref{alg_arka} is useful for purposes of
convergence analysis of the accelerated Kaczmarz algorithm, we
describe an equivalent implementation in Algorithm~\ref{alg_arka2}
that has a lower cost per iteration.
Denoting
\[
g_k := a_i(a^T_iy_k-b_i)/\|a_i\|^2,
\]
we have $x_{k+1} = y_k-g_k$.  Substituting for $v_k$ from
Step~\ref{alg_arka_y} into Step~\ref{alg_arka_v}, we obtain
\begin{align*}
v_{k+1} & = \beta_{k}v_k+(1-\beta_k)y_k - \gamma_kg_k\\
& = \left({\beta_{k}\over \alpha_k} + 1 -\beta_k\right) y_k -
\beta_k{1-\alpha_k\over \alpha_k}x_k-\gamma_kg_k.
\end{align*}
By substituting for $v_{k+1}$ in Step~\ref{alg_arka_y} of
Algorithm~\ref{alg_arka}, for iterate $k+1$, we obtain
\begin{align}
\nonumber
y_{k+1} &= \alpha_{k+1}v_{k+1} + (1-\alpha_{k+1})x_{k+1}\\
\nonumber
&= \alpha_{k+1}\left({\beta_k \over \alpha_k}+1-\beta_k \right)y_k - \alpha_{k+1}\beta_k{1-\alpha_k\over \alpha_k}x_k \\
\nonumber
& \quad\quad
 - \alpha_{k+1}\gamma_k g_k + (1-\alpha_{k+1})(y_k-g_k)\\
\nonumber
&=\left[1{\rcc +} \alpha_{k+1}\beta_k\left({1-\alpha_k\over \alpha_k}
\right)\right]y_k - \alpha_{k+1}\beta_k{1-\alpha_k \over
\alpha_k}x_k \\
\label{eqn_y}
& \quad\quad - \left(1-\alpha_{k+1}+\alpha_{k+1} \gamma_k\right)g_k.
\end{align}
From \eqref{eqn_alg2} and \eqref{eqn_alg3}, we have
\begin{align*}
 -\beta_k\left({1-\alpha_k\over \alpha_k}\right) &= {m-\gamma_k\mumin \over m}\left(1-{\gamma_k(m^2-\mumin)\over m-\gamma_k\mumin}\right)\\
&={m-\gamma_k\mumin\over m} - {\gamma_k(m^2-\mumin)\over m}\\
&=1-m\gamma_k.
\end{align*}
Thus, by substituting into \eqref{eqn_y}, we obtain
\[
y_{k+1}=(1-m\gamma_k)\alpha_{k+1}x_k + (1-\alpha_{k+1}+m\alpha_{k+1}\gamma_k)y_k-(1-\alpha_{k+1}+\alpha_{k+1}\gamma_k)g_k.
\]

By making these substitutions into Algorithm~\ref{alg_arka}, we obtain
the equivalent implementation of  Algorithm~\ref{alg_arka2}.

\begin{algorithm} [htp]                     
\caption{Efficient $\ARK$: $x_{K+1}=\ARK(A,b,\mumin, x_0,K)$}          
\label{alg_arka2}                           
\begin{algorithmic}[1]                    
\STATE {\rcc Check that $\lambda \in [0,\lambdamin]$;}
\STATE Initialize $y_0=x_0$, $\gamma_{-1}=0$, $k=0$. 
\STATE Generate the sequences $\{\gamma_k:~k=0,1,\dotsc,K+1\}$ and
$\{\alpha_{k}:~k=0,1,\dotsc,K+1\}$
as in \eqnok{eqn_alg1} and \eqnok{eqn_alg2};
\WHILE{$k \leq K$}
\STATE Choose $i=i(k)$ from $\{1,2,3,\dotsc,m\}$ with equal probability;
\STATE Set
$s_k \leftarrow (a_i^Ty_k-b_i)/\|a_i\|^2$; \label{alg_arka2_0}
\STATE Set $g_k  \leftarrow
s_ka_i$; \label{alg_arka2_1}
\STATE Set $g'_k \leftarrow
\left(1-\alpha_{k+1}+\alpha_{k+1} \gamma_k\right)s_ka_i$;
\label{alg_arka2_2}
\STATE Set
$y'_{k+1} \leftarrow (1-m\gamma_k)\alpha_{k+1}x_k+(1-\alpha_{k+1}+m\alpha_{k+1}\gamma_k)y_k$;
\label{alg_arka2_3}
\STATE Set $y_{k+1} \leftarrow y'_{k+1} -g'_k$;
\label{alg_arka2_4}
\STATE Set $x_{k+1} \leftarrow y_k-g_k$;
\label{alg_arka2_5}
\STATE $k \leftarrow k+1$;
\ENDWHILE
\end{algorithmic}
\end{algorithm}

The main computations are in Step~\ref{alg_arka2_0} to
Step~\ref{alg_arka2_5} which have operation counts of about $2n$, $n$,
$n$, $3n$, $n$, and $n$, respectively, giving a total of $9n$. If
parallel computation is possible, then Steps~\ref{alg_arka2_0},
\ref{alg_arka2_1}, and \ref{alg_arka2_2} can be performed
simultaneously with Step~\ref{alg_arka2_3} (in time complexity about
$3n$) while Steps~\ref{alg_arka2_4} and \ref{alg_arka2_5} can be
performed simultaneously (in time about $n$).  In this setting, the
total complexity can be reduced to about $4n$ --- a count identical to
the $\RK$ algorithm.

\section{Efficient Implementation for Sparse Data}\label{sec:equality_sparse}

This section considers the case in which the data matrix $A$ is
sparse, with a fraction of $\delta$ nonzeros (with $0 < \delta \ll 1$)
and seeks an efficient implementation of Algorithm~\ref{alg_arka2} for
this case.  We assume that the nonzeros are not concentrated in
certain rows of $A$, that is, the sparsity of each row $a_i^T$ is also
approximately $\delta$.

Note that the $\ARK$ approach starts at a significant disadvantage in
the sparse setting. While sparsity can be exploited easily in $\RK$
--- the average number of operations for each iteration of
Algorithm~\ref{alg_rka} is approximately {\rcc $4 \delta n$} --- the
operation counts of the $\ARK$ algorithms remain at $O(n)$, since the
vectors $x_k$, $y_k$, and $v_k$ are dense in
general. {\rcc (Algorithm~\ref{alg_arka2} has a count of approximately $3n+6
\delta n$ per iteration.)} We now seek a modification of
Algorithm~\ref{alg_arka2} that ``caches'' the updates in order to
maintain some sparsity in the update vectors, thus reducing the
average complexity of each $\ARK$ iteration.

We start by writing the main updating steps in
Algorithm~\ref{alg_arka2} as follows:
\begin{subequations} \label{eqn_fastxy}
\begin{align}
s_k &= (a_{i(k)}^Ty_k-b_{i(k)})/\|a_{i(k)}\|^2, \\
x_{k+1} &= y_k - s_ka_{i(k)}, \\
y_{k+1} &= P_kx_k + Q_ky_k - R_ks_k {\rcc a_{i(k)}},
\end{align}
\end{subequations}
where
\begin{subequations} \label{eqn_PQR}
\begin{align}
P_k &= \alpha_{k+1}(1-m\gamma_k), \\
Q_k &= 1-\alpha_{k+1}+m\alpha_{k+1}\gamma_k, \\
R_k &= 1-\alpha_{k+1}+\alpha_{k+1}\gamma_k.
\end{align}
\end{subequations}
Since updating $x_{k+1}$ and $y_{k+1}$ is quite expensive, we only
update them once on each cycle (that is, once per $T$ iterations).
{\rcc We see by recursive application of \eqnok{eqn_fastxy} that each
  iterate $x_{k+t}$, $y_{k+t}$ for $t \ge 1$ can be expressed as a
  linear combination of $x_k$ and $y_k$, plus one other vector. The
successive updates from rows $a_{i(k)}, a_{i(k+1)}, \dotsc,
a_{i(k+t)}$ can be ``cached'' in vectors $z_t$ and $w_t$, so that
$x_{k+t}$ and $y_{k+t}$ can be written as follows:}
\begin{subequations} \label{eq:xykt}
\begin{align}
x_{k+t}=& \rho_tx_k + \tau_t y_k + z_t, \\
y_{k+t}=&\sigma_tx_k + \nu_t y_k + w_t,
\end{align}
\end{subequations}
where $\rho_t$, $\tau_t$, $\sigma_t$, and $\nu_t$ are {\rcc scalars. Rather
than forming $x_{k+t}$ and $y_{k+t}$ explicitly, we could instead
update the quantities $\rho_t$, $\tau_t$, $\sigma_t$, $\nu_t$, $z_t$,
and $w_t$ at each iteration.} The advantage of doing so is that,
provided $t$ is not too large, the vectors $z_t$ and $w_t$ are not
dense, so the cost of updating this implicit representation is {\rcc usually}
lower than the explicit version. At some point, when $t$ grows too
large, the vectors $z_t$ and $w_t$ ``fill in'' {\rcc enough} that the
advantages of implicit representation are lost. At this point ---
after $T$ steps, say --- we can store the latest vectors $x_{k+T}$ and
$y_{k+T}$ explicitly, and start a new cycle of $T$ iterations.


We now obtain the update formulae for the quantities $\rho_t$,
$\tau_t$, $\sigma_t$, $\nu_t$, $z_t$, and $w_t$. At the starting point
of a cycle, we set $t=0$ and
\[
 \rho_0 = 1, \tau_0 = 0, \sigma_0 = 0, \nu_0=1, z_0=w_0=0,
\]
{\rcc so that \eqref{eq:xykt} holds for $t=0$.}  In the step from iteration
$t$ to iteration $(t+1)$ of a cycle, we have
\[
 x_{k+t+1} = y_{k+t} - {\rcc s_{k+t} a_{i(k+t)}}=\sigma_t x_k + \nu_t y_k + w_t - s_{k+t} {\rcc a_{i (k+t)}},
\]
implying that
\begin{align*}
 \rho_{t+1} &= \sigma_t, \\
\tau_{t+1} &= \nu_t, \\
 z_{t+1} &= w_t-s_{k+t}{\rcc a_{i(k+t)}}.
\end{align*}
Similarly, from
\begin{align*}
 y_{k+t+1} &=P_{k+t}x_{k+t} + Q_{k+t}y_{k+t} - R_{k+t}s_{k+t}{\rcc a_{i(k+t)}}\\
& =P_{k+t}(\rho_tx_k+\tau_ty_k+z_t) + Q_{k+t}(\sigma_tx_k+\nu_ty_k+w_t) - R_{k+t}s_{k+t}{\rcc a_{i(k+t)}}\\
&=(P_{k+t}\rho_t+Q_{k+t}\sigma_t)x_{k} + (P_{k+t}\tau_t +
Q_{k+t}\nu_t)y_{k} \\
& \quad\quad + (P_{k+t}z_t +
Q_{k+t}w_t-R_{k+t}s_{k+t}{\rcc a_{i(k+t)}}),
\end{align*}
we have
\begin{align*}
 \sigma_{t+1}& =P_{k+t}\rho_t+Q_{k+t}\sigma_t, \\
\nu_{t+1}&=P_{k+t}\tau_t + Q_{k+t}\nu_t, \\
w_{t+1}&=P_{k+t}z_t + Q_{k+t}w_t-R_{k+t}s_{k+t}{\rcc a_{i(k+t)}}.
\end{align*}
The scalar $s_{k+t}$ can be computed from
\begin{align}
\notag
 s_{k+t} & = {\rcc (a^T_{i(k+t)}}y_{k+t}-{\rcc b_{i(k+t)}) / \| a_{i(k+t)} \|^2} \\
\notag
& = {\rcc [a_{i(k+t)}^T(\sigma_tx_k+\nu_ty_k+w_t)- b_{i(k+t)}] / \| a_{i(k+t)} \|^2} \\
\label{eq:skt}
& = {\rcc [\sigma_t a_{i(k+t)}^Tx_k+
\nu_t a_{i(k+t)}^T y_k
+ a_{i(k+t)}^T w_t - b_{i(k+t)}]/\| a_{i(k+t)} \|^2}.
\end{align}
We show this approach in full detail, for cycles of fixed length $T$,
in Algorithm~\ref{alg_sark}.

\begin{algorithm} [t!]                     
\caption{Efficient  $\ARK$ for Sparse $A$: $x_{K+1}=\SARK(A, b, \mumin, x_0, T, K)$ }          
\label{alg_sark}                           
\begin{algorithmic}[1]                    
\STATE{\rcc  Check that $\lambda \in [0,\lambdamin]$;}
\STATE Initialize $y_0=x_0$, $\gamma_{-1}=0$, $k=0$;
\STATE Generate the sequences
\begin{alignat*}{2}
P_k &=\alpha_{k+1}(1-m\gamma_k), \;\; &&k=0,1,\dotsc,K, \\
Q_k &=1-\alpha_{k+1}+m\alpha_{k+1}\gamma_k, \;\; && k=0,1,\dotsc,K, \\
R_k &=1-\alpha_{k+1}+\alpha_{k+1}\gamma_k, \;\; && k=0,1,\dotsc,K;
\end{alignat*}
\WHILE{$k \leq K$}
\STATE Set $t\leftarrow 0$, $\bar{x}\leftarrow x_k$, $\bar{y}\leftarrow y_k$, $\rho_0\leftarrow 1$,
$\tau_0\leftarrow 0$, $\sigma_0\leftarrow 0$, $\nu_0\leftarrow 1$, $z_k\leftarrow 0$, $w_k\leftarrow 0$;
\WHILE{$t<T$}
\IF{$k+t\geq K$}
\STATE break;
\ENDIF
\STATE Choose $i=i(k)$ from
$\{1,2,3,\dotsc,m\}$ with equal probability;
\STATE Set
\begin{align*}
s_{k+t}&=(a_{i}^T(\sigma_t\bar{x}+\nu_t\bar{y}+w_t)-b_{i})/\|a_i\|^2,\\
 \rho_{t+1}&=\sigma_t,\\
 \tau_{t+1}&=\nu_t,\\
 \sigma_{t+1}&=P_{k+t}\rho_t+Q_{k+t}\sigma_t,\\
\nu_{t+1}&=P_{k+t}\tau_t + Q_{k+t}\nu_t,\\
 z_{t+1}&=w_t-s_{k+t}a_{i},\\
w_{t+1}&=P_{k+t}z_t + Q_{k+t}w_t-R_{k+t}s_{k+t}a_{i};
\end{align*}
\STATE Set $t \leftarrow t+1$;
 \ENDWHILE
\STATE Set $k \leftarrow k+T$;
 \STATE \label{alg_sark_endcycle} Set
\begin{align*}
 x_k &=\rho_t \bar{x} +\tau_t \bar{y} + z_t, \\
  y_k &=\sigma_t\bar{x} + \nu_t \bar{y} + w_t;
\end{align*}
\ENDWHILE
\end{algorithmic}
\end{algorithm}

Note that $w_{t}$ and $z_{t}$ have nonzeros in locations where any of
the vectors \\ $ {\rcc a_{i(k)}, a_{i(k+1)}, \dotsc, a_{i(k+t-1)}} $
contain nonzeros. Thus, assuming that these vectors do not overlap
significantly, and that each of them has about $\delta n$ nonzeros, we
can estimate that $w_t$ and $z_t$ have about $t \delta n$ nonzeros, {\rcc in
the same locations as each other. The major costs at each iteration
are as follows:
\begin{itemize}
\item[-] $s_{k+t}$ costs about $6\delta n$ operations when evaluated
  according to \eqref{eq:skt}, since $a_{i(k+t)}$ has about $\delta n$
  nonzeros.
\item[-] $z_{t+1}$ costs about $2\delta n$ operations, for the same
  reason.
\item[-] $w_{t+1}$ costs about $3t\delta n+2\delta n$ operations, since
  $z_t$ and $w_t$ both have about $\delta nt$ nonzeros, in the same
  locations, and $a_{i(k+t)}$ has about $\delta n$ nonzeros.
\end{itemize}
} 
The cost of updating $x_k$ and $y_k$ in Step~\ref{alg_sark_endcycle}
is about $3n+T\delta n$ each. Therefore, over a complete cycle of $T$
iterations, we expect an approximate operation count of
\[
\sum_{t=1}^{T-1}\left(3t\delta n+10\delta n\right) + 6n +
2T\delta n \approx 1.5(T-1)T\delta n+6n+12T\delta n,
\]
giving an approximate average cost per iteration of
\[
1.5(T-1)\delta n + {6n\over T} + 12\delta n.
\]
This count is minimized by setting $T=T^*=2/\sqrt{\delta}$; for this
value we obtain an average count per iteration of
$6\sqrt{\delta}n+10.5\delta n$.
This is still worse than the iteration cost for $\RK$ (which is
$O(\delta n)$) but much better than that of $\ARK$ (which is
$O(n)$). We show in the next section that the total number of
iterations required by $\ARK$ to achieve a prescribed accuracy is
lower {\rcc than for $\RK$}, in general, which makes Algorithm~\ref{alg_sark}
competitive in some regimes.

\section{Convergence Rate} \label{sec:convergence}


In this section, we study the convergence behavior of
Algorithms~\ref{alg_rka}, \ref{alg_arka2}, and \ref{alg_sark},
estimating in particular the total number of operations required to
achieve a specified level of accuracy.  We also compare the approach
with the conjugate gradient ($\CG$) algorithm, applied to the ``normal
equations'' system $A^TAx=A^Tb$.
To simplify the comparisons, we assume throughout that
\eqnok{eq:anormal} holds, so that  $\|A\|_F^2 = m$.

The convergence of $\RK$ (Algorithm~\ref{alg_rka}) is studied in
\citep{Strohmer09, Leventhal08}\footnote{In \cite{Strohmer09}, it is
  required that $A$ has full column rank, but this requirement is
  removed in \cite [Theorem 4.3]{Leventhal08}, where the Hoffman
  constant $L$ is equivalent to ${1/\sqrt{\lambdamin}}$.} It is shown that
\begin{equation} \label{eqn_rk_bound}
\mathbb{E}(\|x_{k+1}-\mathcal{P}_{A,b}(x_{k+1})\|^2) \leq
\left(1-{\lambdamin \over m}\right)^{k+1}\|x_0-\mathcal{P}_{A,b}(x_0)\|^2,
\end{equation}
where the expectation is taken over the indices
$i(0),i(1),i(2),\dotsc$ selected at each iteration.

For $\ARK$, we have the following result. The proof can be found in
the appendix. It is quite technical, and follows to some extent the
framework developed by Nesterov \cite{Nesterov10} for the accelerated
coordinate descent method.
\begin{theorem} \label{thm_main}
Apply $\ARK$ to the problem \eqref{eqn_problem} with $\mumin
\in [0,{\lambdamin}]$, and define $\sigma_1=1+{\sqrt{\mumin}\over 2m}$
and $\sigma_2=1-{\sqrt{\mumin}\over 2m}$.  Then we have for any $k\geq
0$ that
\begin{equation}
\mathbb{E}(\|v_{k+1}-x^*\|^2_{(A^TA)^+}) \leq {4\|x_0-x^*\|_{(A^TA)^+}^2 \over (\sigma_1^{k+1} +\sigma_2^{k+1})^{2}} \label{eqn_thm1}
\end{equation}
and
\begin{equation}
\mathbb{E}(\|x_{k+1}-x^*\|^2) \leq {4\mumin \|x_0-x^*\|_{(A^TA)^+}^2 \over (\sigma_1^{k+1} - \sigma_2^{k+1})^{2}}, \label{eqn_thm2}
\end{equation}
where
$
x^*:=\mathcal{P}_{A,b}(x_0)=x_0+A^+(b-Ax_0).
$
\end{theorem}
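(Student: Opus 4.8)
The plan is to follow the standard Nesterov estimate-sequence machinery, adapted to the randomized setting à la Nesterov's accelerated coordinate descent \cite{Nesterov10}, but working in the seminorm $\|\cdot\|_{(A^TA)^+}$ rather than the Euclidean norm, since that is the norm in which the Kaczmarz projection step behaves like a gradient step for the dual objective $\tfrac12\|A^Ty\|^2-b^Ty$. First I would reduce to the case $x_0 \perp \ker A$ (equivalently, all iterates lie in $\mathrm{range}(A^T)$): since $x^*=\mathcal P_{A,b}(x_0)$, the error vectors $x_k-x^*$, $y_k-x^*$, $v_k-x^*$ all lie in $\mathrm{range}(A^T)$, on which $(A^TA)^+$ acts as a genuine inner product and $\|u\|_{(A^TA)^+}^{-1}$-type bounds relate to $\|u\|$ via the eigenvalue $\lambdamin$. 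This is what ultimately converts \eqref{eqn_thm1} into \eqref{eqn_thm2}.

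Second, I would set up the one-step progress inequality. Writing $g_k = a_{i(k)}(a_{i(k)}^Ty_k-b_i)$ (using $\|a_i\|=1$), the key identity is $\mathbb{E}_{i(k)}[g_k \mid y_k] = \tfrac1m A^TA(y_k-x^*) = \tfrac1m \nabla\phi(y_k)$ where $\phi(x)=\tfrac12\|A(x-x^*)\|^2$, and $\mathbb{E}_{i(k)}[\|g_k\|_{(A^TA)^+}^2 \mid y_k] = \tfrac1m\|y_k-x^*\|^2$ — the expected squared "gradient" in the $(A^TA)^+$ seminorm equals $1/m$ times the Euclidean distance, which is in turn $\le \tfrac1m \cdot \tfrac1\lambdamin \cdot \lambdamin \|y_k-x^*\|^2$, i.e. controlled by a "strong convexity"-type term. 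I would then define the Lyapunov/estimate-sequence quantity, something of the form
\[
\Phi_k := \tfrac1{\gamma_{k-1}^2}\,\mathbb{E}\|v_k-x^*\|_{(A^TA)^+}^2 + c_k\,\mathbb{E}\|x_k-x^*\|^2,
\]
and show $\Phi_{k+1}\le \Phi_k$, using the defining relations \eqref{eqn_alg1}–\eqref{eqn_alg3} among $\alpha_k,\beta_k,\gamma_k$ to make the cross terms cancel. The algebra mirrors Nesterov's: expand $\|v_{k+1}-x^*\|_{(A^TA)^+}^2$ using Step~\ref{alg_arka_v}, take conditional expectation, substitute $y_k=\alpha_k v_k+(1-\alpha_k)x_k$, and use the three-point identity for the squared seminorm together with the quadratic bound $\phi(x_{k+1}) \le \phi(y_k) - \tfrac{m}{2}\mathbb{E}\|g_k\|_{(A^TA)^+}^2$ (a Kaczmarz analogue of the descent lemma).

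Third, once the monotone Lyapunov inequality is in hand, telescoping gives $\mathbb{E}\|v_{k+1}-x^*\|_{(A^TA)^+}^2 \le \gamma_k^2 \Phi_0$ and the sandwich bound on $\gamma_k$ does the rest: from \eqref{eqn_alg1}, $\gamma_k \ge \gamma_{k-1}$ and $\gamma_k$ satisfies a recursion whose solution obeys $1/\gamma_k \ge \tfrac{m}{2}(\sigma_1^{k+1}-\sigma_2^{k+1})/\sqrt{\mumin}$ and similarly $\tfrac1{\gamma_k} \le$ something comparable, with $\sigma_{1,2}=1\pm\tfrac{\sqrt{\mumin}}{2m}$; establishing these two-sided bounds on $\gamma_k$ by induction (showing the golden-ratio-type recursion \eqref{eqn_alg1} is squeezed between the two geometric sequences $\sigma_1^k,\sigma_2^k$) is a self-contained lemma. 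Combining $\Phi_0 = \tfrac1{\gamma_{-1}^2}\|\cdot\|$ — here one must handle $\gamma_{-1}=0$ carefully, e.g. by starting the telescope at $k=0$ with $v_0=x_0$ and the convention that the $v$-term is absorbed, or by a limiting argument — with the $\gamma_k$ bounds yields \eqref{eqn_thm1}; then $\lambdamin\|x_{k+1}-x^*\|^2 \le \|x_{k+1}-x^*\|_{(A^TA)^+}^{-?}$... more precisely $\|x_{k+1}-x^*\|^2 \le \tfrac1\lambdamin \cdot (\text{the }\|x\|^2\text{ term in }\Phi_{k+1}) \le \tfrac{\mumin}{\lambdamin}\cdot(\cdots)$ combined with $\mumin\le\lambdamin$ gives \eqref{eqn_thm2} after tracking the $(\sigma_1^{k+1}-\sigma_2^{k+1})^2$ versus $(\sigma_1^{k+1}+\sigma_2^{k+1})^2$ denominators.

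The main obstacle I anticipate is getting the Lyapunov inequality $\Phi_{k+1}\le\Phi_k$ to close \emph{exactly} — i.e. verifying that the particular choices \eqref{eqn_alg2}–\eqref{eqn_alg3} of $\alpha_k,\beta_k$ in terms of $\gamma_k$ are precisely those that annihilate the residual cross terms after taking the conditional expectation over $i(k)$; this is where the $1/m$ factors and the distinction between the $(A^TA)^+$-seminorm and the Euclidean norm interact delicately, and it is the step where the "several subtle issues related to allowable scalings" alluded to earlier would bite if normalization \eqref{eq:anormal} were dropped. A secondary technical nuisance is the bookkeeping around $\ker A$: one must be consistent about whether $x^*$ and all errors are projected, so that $(A^TA)^+$ really is positive definite on the relevant subspace and the step $\lambdamin\|u\|^2 \le \|u\|_{A^TA}^2$ and its pseudoinverse counterpart are valid.
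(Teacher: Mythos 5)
Your overall architecture (reduce to $\mathcal{R}(A^T)$, expand the $v$-update in the $\|\cdot\|_{(A^TA)^+}$ seminorm, use the conditional-expectation identities for the Kaczmarz step, couple the $v$-error in the seminorm with the Euclidean $x$-error in a Nesterov-style estimate sequence) matches the paper's proof in spirit, but the step that is supposed to produce the rate is wrong as stated. You claim that after establishing a monotone Lyapunov inequality $\Phi_{k+1}\le\Phi_k$, telescoping gives $\mathbb{E}\|v_{k+1}-x^*\|_{(A^TA)^+}^2\le\gamma_k^2\Phi_0$ and that the decay then comes from two-sided bounds of the form $1/\gamma_k\ge \tfrac{m}{2}(\sigma_1^{k+1}-\sigma_2^{k+1})/\sqrt{\mumin}$. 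This cannot work: the recursion \eqref{eqn_alg1} forces $\gamma_k$ to be increasing and confined to $[1/m,\,1/\sqrt{\mumin}]$, so $1/\gamma_k\le m$ is bounded while $\sigma_1^{k+1}-\sigma_2^{k+1}\to\infty$; moreover, with $\gamma_k$ bounded and $\Phi_0$ a fixed constant (indeed infinite if you keep the weight $1/\gamma_{-1}^2$ with $\gamma_{-1}=0$), the bound $\gamma_k^2\Phi_0$ does not decay at all. The geometric rate in \eqref{eqn_thm1}--\eqref{eqn_thm2} does not come from growth of $1/\gamma_k$; it comes from the fact that the one-step inequality is a contraction with factor $\beta_k=1-\gamma_k\mumin/m<1$. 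The paper makes this precise by introducing weight sequences $A_k,B_k$ with $B_{k+1}^2=B_k^2/\beta_k$ and $A_{k+1}=\gamma_kB_{k+1}$ (so the weights grow geometrically even though $\gamma_k$ is bounded), multiplying the one-step bound \eqref{eqn_thmproof1} by $B_{k+1}^2$ to obtain the supermartingale-type inequality \eqref{eqn_proof_key}, and then deriving the coupled recursions \eqref{eqn_thmproofb} and \eqref{eqn_thmproofa}, $B_{k+1}\ge B_k+\tfrac{\mumin}{2m}A_k$ and $A_{k+1}\ge A_k+\tfrac{B_k}{2m}$, which are solved via the eigendecomposition of the $2\times 2$ matrix with eigenvalues $\sigma_1,\sigma_2$; this is also exactly where the factor $\mumin$ in \eqref{eqn_thm2} arises (from $A_{k+1}\ge B_0(\sigma_1^{k+1}-\sigma_2^{k+1})/(2\sqrt{\mumin})$), not from a norm-equivalence step through $\lambdamin$ as you sketch.

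Two smaller inaccuracies: the identity you assert, $\mathbb{E}_{i}\|g_k\|_{(A^TA)^+}^2=\tfrac1m\|y_k-x^*\|^2$, is not an identity; the correct statements are the paper's Lemma~\ref{lem_1}, $\mathbb{E}_i\|a_i(a_i^Ty_k-b_i)\|_{(A^TA)^+}^2\le\tfrac1m\|Ay_k-b\|^2$, together with the exact identity of Lemma~\ref{lem_2}, $\mathbb{E}_i\|\mathcal{P}_{a_i,b_i}(y_k)-x^*\|^2=\|y_k-x^*\|^2-\tfrac1m\|Ay_k-b\|^2$, whose negative term $-\mathbb{E}\|x_{k+1}-x^*\|^2$ is precisely what feeds the $x$-component of the Lyapunov function; without it your cross terms will not close. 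Finally, your closing argument for \eqref{eqn_thm2} (dividing the $\|x\|^2$ term by $\lambdamin$ and invoking $\mumin\le\lambdamin$) is both unnecessary and not what delivers the stated constant; in the paper $\mathbb{E}\|x_{k+1}-x^*\|^2\le B_0^2r_0^2/A_{k+1}^2$ follows directly from the weighted telescoping, with no appeal to $\lambdamin$ at that stage.
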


Essentially, Theorem~\ref{thm_main} ensures that the $\ARK$ algorithm
converges in expectation to the projection of the initial point $x_0$
onto the affine space defined by $Ax=b$.

Theorem~\ref{thm_main} shows that when $\mumin>0$, the $\ARK$
algorithm converges at a linear rate. If the value of $\mumin=0$, we
can obtain a sublinear rate. {\rcc By taking limits as
$\lambda \to 0^+$ in \eqref{eqn_thm2}, we have}
\begin{align}
\nonumber
&\lim_{\mumin\rightarrow 0^+} {4\mumin \|x_0-x^*\|_{(A^TA)^+}^2 \over (\sigma_1^{k+1} - \sigma_2^{k+1})^{2}}\\
\nonumber
& =\lim_{\mumin\rightarrow 0^+} {4\mumin \|x_0-x^*\|_{(A^TA)^+}^2 \over \left(\left(1+{(k+1)\sqrt{\mumin}\over 2m} + o(\sqrt{\mumin})\right) - \left(1-{(k+1)\sqrt{\mumin}\over 2m} + o(\sqrt{\mumin})\right)\right)^{2}}\\
\nonumber
&=\lim_{\mumin\rightarrow 0^+} \frac{4\mumin \|x_0-x^*\|_{(A^TA)^+}^2}{\left({{(k+1)\sqrt{\mumin}\over m} + o(\sqrt{\mumin})}\right)^2}\\
\label{eq:ark_sublin}
&=\frac{4m^2\|x_0-x^*\|_{(A^TA)^+}^2}{(k+1)^2}.
\end{align}

Next, we compare convergence rates of $\RK$, $\ARK$, and
$\CG$.  We assume further that $\mumin$ is set to its
optimal value $\lambdamin$ in $\ARK$. Since all algorithms
converge rapidly when $\lambdamin({A^TA})$ is large, we are
particularly interested in the case in which $\lambdamin$ is
small, that is, the linear system is ill-conditioned.

\subsection{Comparison between $\RK$ and $\ARK$ for Dense $A$}
\label{sec:equality_cmp}

The right-hand side of the bound \eqref{eqn_rk_bound} decreases by a
factor of $1-\mumin/ m$ at each iteration.  For $\ARK$, we
have that $\sigma_2^k \to 0$, so the decrease of the right-hand side
is governed mainly by the behavior of the $\sigma_1$ term in the
denominator. Asymptotically, we have a decrease factor per iteration
of approximately
\beq \label{eqn_sigma1}
\sigma_1^{-2} =   \left( 1+ \frac{\sqrt{\mumin}}{2m}\right)^{-2} \approx
1-\frac{\sqrt{\mumin}}{m}.
\eeq
We conclude that for small values of $\mumin$, the $\ARK$ approach
will have significantly faster linear convergence. Even if we measure
convergence rate {\em per operation}, $\ARK$ is still faster in general,
since in the implementation of Algorithm~\ref{alg_arka2}, it requires
only twice as many operations per iteration as $\RK$.

\subsection{Comparison among $\RK$, $\ARK$, and $\SARK$ for Sparse $A$}
\label{sec:equality_cmp_sparse}

When the coefficient matrix $A$ is sparse, the comparisons change,
because each iteration of $\RK$ costs less than each iteration of
either $\ARK$ or $\SARK$. On the other hand, fewer iterations of
$\ARK$ are required to reduce the expected error below a specified
tolerance. From \eqnok{eqn_rk_bound}, we deduce that the number $N$ of
iterations needed to reduce
$\mathbb{E}(\|x_N-\mathcal{P}_{A,b}(x_N)\|^2)$ below a target
threshold $\epsilon$ is $O((m/\lambdamin) | \log \epsilon |)$.
We have from \eqnok{eqn_thm2} and \eqnok{eqn_sigma1} (and ignoring a
$\log \mumin$ term) that the number of iterations $N$ of
$\ARK$ and $\SARK$ needed to reduce $\mathbb{E}(\|x_k-x^*\|^2)$ below
$\epsilon$ is $O((m/\sqrt{\mumin}) | \log \epsilon |)$. Assuming
approximately $\delta n$ nonzeros in each row of each row of $A$, we
summarize the operation and iteration counts for $\RK$, $\ARK$, and
$\SARK$ in Table~\ref{tab_e}.
\begin{table}
\centering
  \begin{tabular}{lcc}
    \hline
& Approx Operations per Iteration & Approx Iterations \\ \hline\hline
    $\RK$ {\rcc (Algorithm~\ref{alg_rka})} & $4\delta n$ & $|\log\epsilon|(m /\lambdamin)$ \\ \hline
    $\ARK$ {\rcc (Algorithm~\ref{alg_arka2})} & $3n+6\delta n $ & $|\log\epsilon| (m / \sqrt{\mumin})$ \\ \hline
    $\SARK$ {\rcc (Algorithm~\ref{alg_sark})} & $6\sqrt{\delta}n +10.5\delta n $ & $|\log\epsilon| (m / \sqrt{\mumin})$ \\
    \hline
  \end{tabular}
\caption{Operation and Iteration Counts for to achieve expected
  accuracy $\epsilon$ for randomized Kaczmarz variants.} \label{tab_e}
\end{table}

From the data in Table~\ref{tab_e}, and assuming that $\mumin$ is
set to its optimal value $\lambdamin$ in the $\ARK$ and $\SARK$
algorithms, we conclude the following about the relative performance
of these three approaches for various values of $\delta$ and
$\lambdamin$.
\begin{itemize}
 \item[-] $\RK$ will be approximately the best option if
\[
\mumin\geq \max \left\{\left({4\delta\over
  3+6\delta}\right)^2,~\left({4\sqrt{\delta}\over
  6+10.5\sqrt{\delta}}\right)^2\right\};
\]
\item[-] $\SARK$ will be approximately best if
\[
\mumin\leq \left({4\sqrt{\delta}\over
  6+10.5\sqrt{\delta}}\right)^2~\text{and}~\delta \leq 0.1;
\]
\item[-] $\ARK$ will be approximately best, otherwise.
\end{itemize}
We illustrate these claims in Figure~\ref{fig_sar}.  Note that our
comparison is based on approximate and worst-case analyses, which is
why we claim only ``approximate'' superiority for each set of
values in question.
{\rcc We can confidently say, however, $\RK$ will be superior for
  larger values of $\lambdamin$, while $\ARK$ favors small
  $\lambdamin$ and large $\delta$, and $\SARK$ is superior to $\ARK$
  for small values of $\delta$. For small fixed values of
  $\lambdamin$, $\RK$ will be superior for small values of $\delta$,
  then $\SARK$ will be superior for intermediate $\delta$ values, and
  $\ARK$ superior for larger $\delta$ values.  }

\begin{figure}[htp!]
  \centering
    \subfloat{\includegraphics[width=0.8\textwidth]{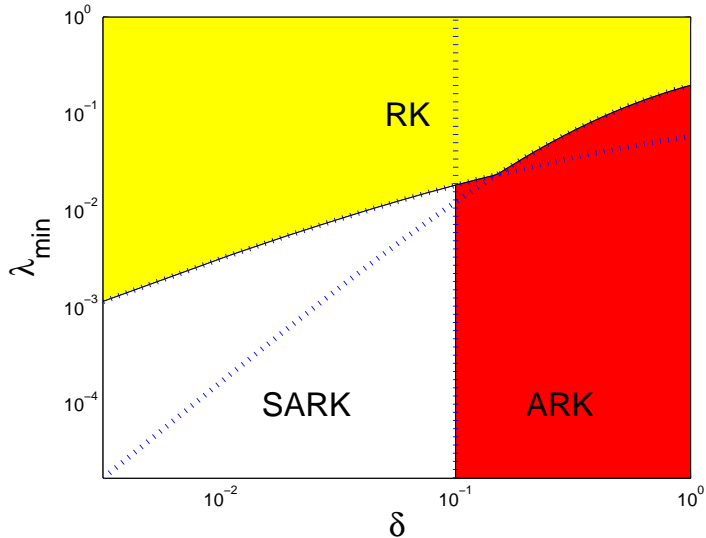}}
    \caption{Illustration of the regions of the
      $(\delta,\lambdamin)$ space for which $\RK$, $\ARK$, and
      $\SARK$ are approximately superior (that is, $\lambda_{\min}$ in
      the graph). The white (yellow, red) area indicates that $\SARK$
      ($\RK$, $\ARK$) is approximately best for the given combination
      of values.\label{fig_sar}}
\end{figure}


\subsection{Comparison among $\RK$, $\SARK$, and $\CG$}
\label{sec:boundcmp_cg}

We next compare $\RK$ and $\ARK$ with conjugate gradient ($\CG$)
applied to the normal-equations system $A^TAx=A^Tb$. $\CG$ is a
deterministic algorithm that requires matrix-vector multiplications
with the entire data matrix $A$ and its transpose at every iteration,
while $\RK$ and $\ARK$ are randomized algorithms for which each
iteration requires access to just one row of $A$, but which require
many more iterations than $\CG$ in general. $\CG$ does not require
estimates of parameters such as $\lambdamin$ (though we show in the
next section that estimation of this parameter can be incorporated
into RK algorithms efficiently). Because the CG and RK approaches have
very different convergence properties, and because their data access
requirements are quite different, there are situations in which one or
other of them will have an advantage. Here we do a simple comparison
between CG and the RK methods based only on convergence rate as a
function of operation count, and put aside the issues of suitability
of one class or the other to various contexts and various
computational platforms.

The asymptotic convergence rate for $\CG$ is

\begin{equation} \label{eqn_cg_bound}
{\rcc
\|Ax_{k+1}-b\|^2 \leq
\left(\frac{\sqrt{\lambdamax}-\sqrt{\lambdamin}}{\sqrt{\lambdamax}+\sqrt{\lambdamin}}\right)^{2(k+1)}\|Ax_{0}-b\|^2.
}
\end{equation}
(See, for example, {\rcc formula
(5.36)} in \citep{NocedalWright06}.) The decrease factor per iteration
is thus approximately
\begin{equation}  \label{eqn_cg_rate}
1-{\rcc 4}{\sqrt{\lambdamin}\over \sqrt{\lambdamax}}.
\end{equation}
If $A$ has sparsity $\delta$, the cost of the main operation of $\CG$
--- multiplication by $A^TA$ --- is about $4 \delta mn$
operations. This is the approximate cost of $m$ iterations of $\RK$
and about $(2/3) \sqrt{\delta} m$ iterations of $\SARK$. Thus, for a
roughly equivalent number of operations, assuming again that $\mumin =
\lambdamin$, we obtain the following approximate decrease factors for
$\RK$ and $\SARK$:
\begin{subequations} \label{eq:rkcg}
\begin{alignat}{2} \label{eqn_rk_cg_rate}
\RK:&~\left(1-\frac{\lambdamin}{m} \right)^m  &&
\approx 1-\lambdamin,\\
\label{eqn_ark_cg_rate}
\ARK:&~\left(1-\frac{\sqrt{\lambdamin}}{m}\right)^{(2/3) \sqrt{\delta}m}
&& \approx 1-\frac23 \sqrt{\delta} \sqrt{\lambdamin}.
\end{alignat}
\end{subequations}

By comparing \eqref{eqn_cg_rate} and \eqref{eqn_rk_cg_rate}, we see
that $\RK$ may be competitive with $\CG$ if
$\sqrt{\lambdamin\lambdamax}$ (the geometrically averaged eigenvalue
of $A^TA$) is significantly larger than $1$.  From \eqref{eqn_cg_rate}
and \eqref{eqn_ark_cg_rate}, we see that $\SARK$ may be competitive
with $\CG$ if $\delta \lambdamax$ is significantly great than $1$.

We note however that the asymptotic rate \eqnok{eqn_cg_bound} for
$\CG$ is somewhat pessimistic. In practice, performance of $\CG$
depends on the distribution of the eigenvalues of $A^TA$. Rapid
convergence is often seen on early iterations, as the largest
eigenvalues are ``resolved,'' but the method often settles into a
steady linear rate on later iterations.

\section{Computational Results} \label{sec:computations}

In this section, we study the computational behavior of $\RK$, $\ARK$,
$\SARK$, and $\CG$ on a variety of test problems. We start by
comparing $\RK$ and $\ARK$ for dense $A$, then compare $\RK$, $\ARK$,
and $\SARK$ for sparse $A$. Finally, we compare the randomized
algorithms ($\RK$ and $\ARK$) to the deterministic algorithm
$\CG$.  

Since we need to supply the parameter $\mumin$ to $\ARK$, we introduce
three ways of setting this parameter:
\begin{itemize}
\item[-] $\ARK(\lambdamin)$: set $\mumin = \lambdamin$. This choice gives
  the theoretically best convergence rate, and should be used if
  $\lambdamin$ is known.
\item[-] $\ARK(0)$: set $\mumin = 0$. This choice requires no additional
  knowledge of $A$ and guarantees convergence, though at a sublinear
  rate (see \eqnok{eq:ark_sublin}).
\item[-] $\ARKauto$: $\mumin$ determined automatically. Run $\RK$ for
  $K_2$ iterations and record $x_{K_1+1}$ and $x_{K_2+1}$, where
  $K_2=\lceil {K\over 10}\rceil$ and $K_1=\max (1, K_2-10m)$. From
  \eqref{eqn_rk_bound}, we can say roughly that
  $\mathbb{E}(\|Ax_k-b\|^2) \sim (1-\lambdamin/m)^k$, so by setting
  $k=K_1$ and $k=K_2$, we deduce that $\lambdamin$ could be estimated
  by the formula
\begin{align*}
 m\left[1-\left({\|Ax_{K_2}-b\|\over \|Ax_{K_1}-b\|}\right)^{2\over K_2-K_1}\right].
\end{align*}
We find that a more conservative estimate of $\lambdamin$ works better
in practice, in which we
 replace the exponent $2/(K_2-K_2)$ by $0.5/(K_2-K_1)$ in our
 experiments.  If the entire matrix $A$ cannot be obtained at one
 time, one could estimate $\|Ax-b\|^2$ by using a sample of the rows
 of $Ax-b$.
\end{itemize}

We measure performance by plotting residual error $\|Ax-b\|$ against
the number of iterations and the number of operations
The initial point $x_0 = 0$ is used in all algorithms.

\subsection{Comparison between $\RK$ and $\ARK$ for Dense Data} \label{sec:NS_1}

Synthetic data for these tests is generated as follows: All elements
of the data matrix $A\in \R^{m\times n}$ and the optimal solution
$x^*\in \R^{n}$ are chosen to be i.i.d. $\mathcal{N}(0, 1)$. The
length of all rows in $A$ is normalized to $1$. The right-hand side
$b$ is set to $b=Ax^*$. We run all algorithms 20 times (with 20
different sample sequences) and report the averaged performance.

Figures~\ref{fig_e_1} and~\ref{fig_e_2} show residual errors for $\RK$
and $\ARK$ with different values of $\mumin$. Figure~\ref{fig_e_1}
focuses on small problems while Figure~\ref{fig_e_2} shows larger
cases. In the graphs in the left column, the horizontal axis is
iteration number, while in the right column, the horizontal axis is
operation count, which is our proxy for computation cost.  
{\rcc Operation count is obtained by scaling the number of iterations
  by our estimate of the average number of floating-point operations
  per iteration (see Table~\ref{tab_e}).} From these figures, we
observe the following.
\begin{itemize}
\item[-] $\ARK (\lambdamin)$ and $\ARKauto$ converge much faster than
  $\RK$ (in both iterations and operations), except for very well
  conditioned problems.
\item[-] After the initial phase in which $\lambdamin$ is estimated,
  $\ARKauto$ converges at about the same rate as $\ARK(\lambdamin)$.
\item[-] $\ARK(0)$ is not competitive with the other variants of $\ARK$,
  but is competitive with $\RK$ on ill conditioned problems.
\end{itemize}


\begin{figure}[htp!]
  \centering
    \subfloat{\includegraphics[width=0.8\textwidth]{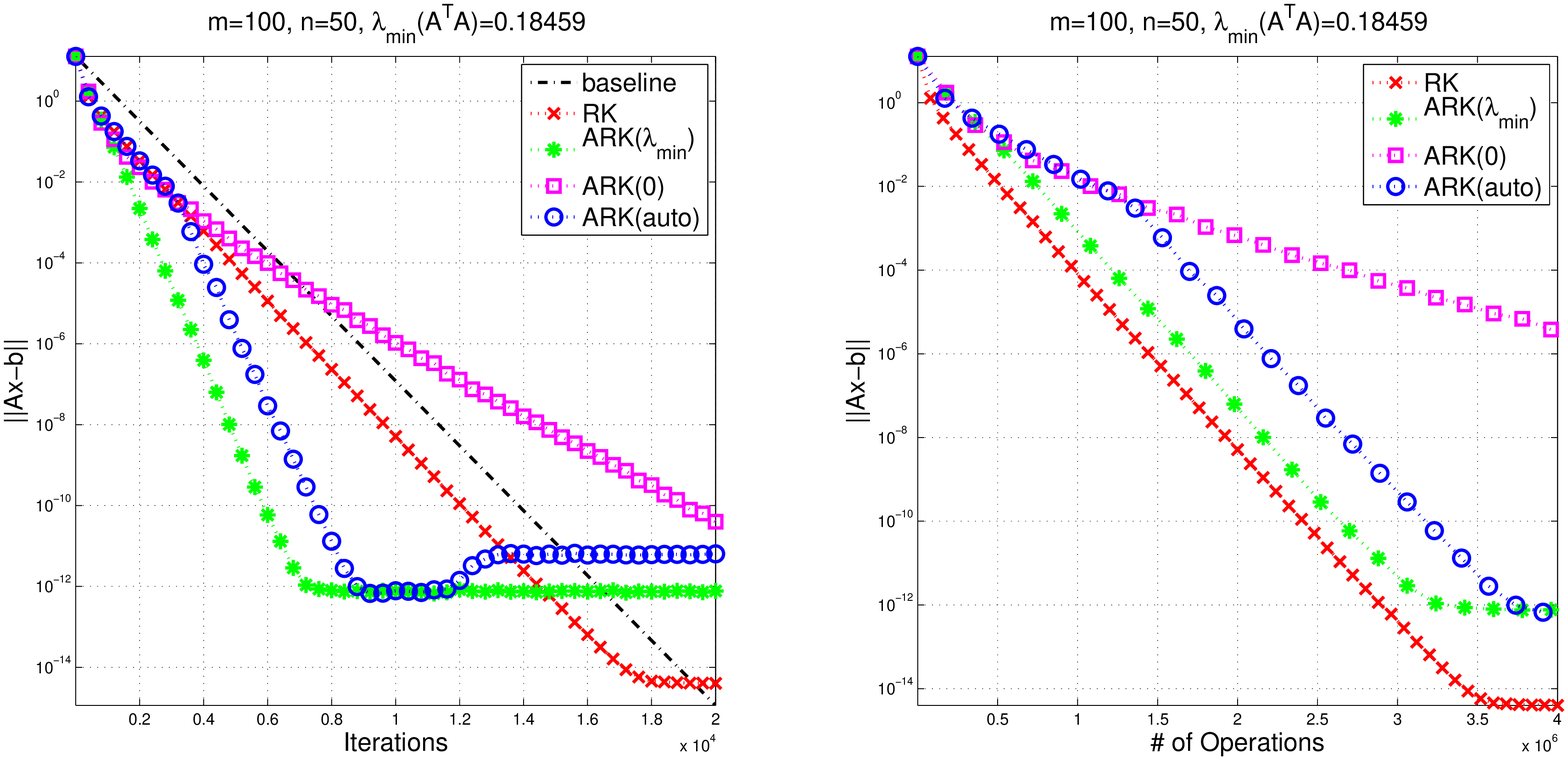}}\\
    \subfloat{\includegraphics[width=0.8\textwidth]{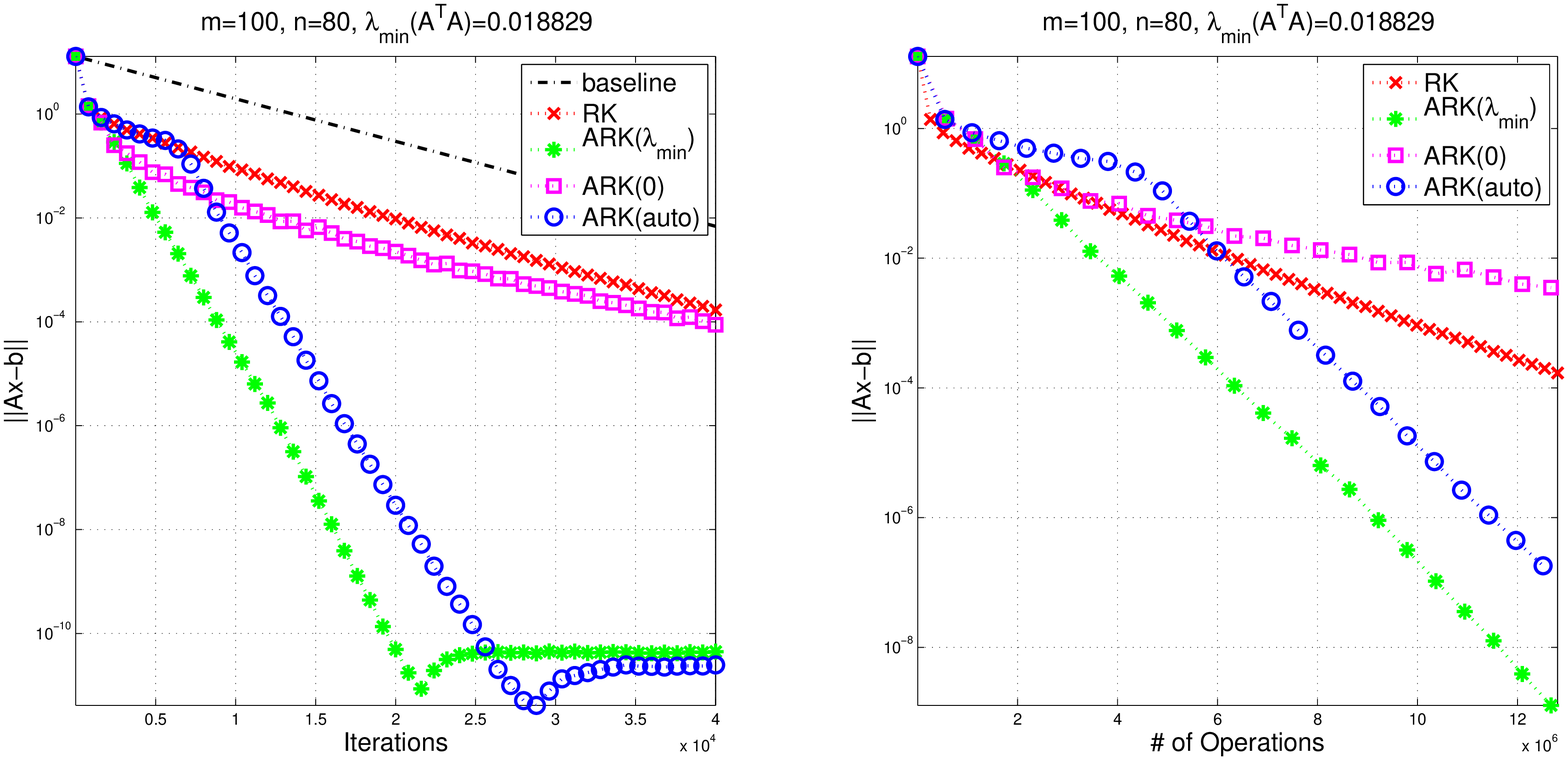}}\\
    \subfloat{\includegraphics[width=0.8\textwidth]{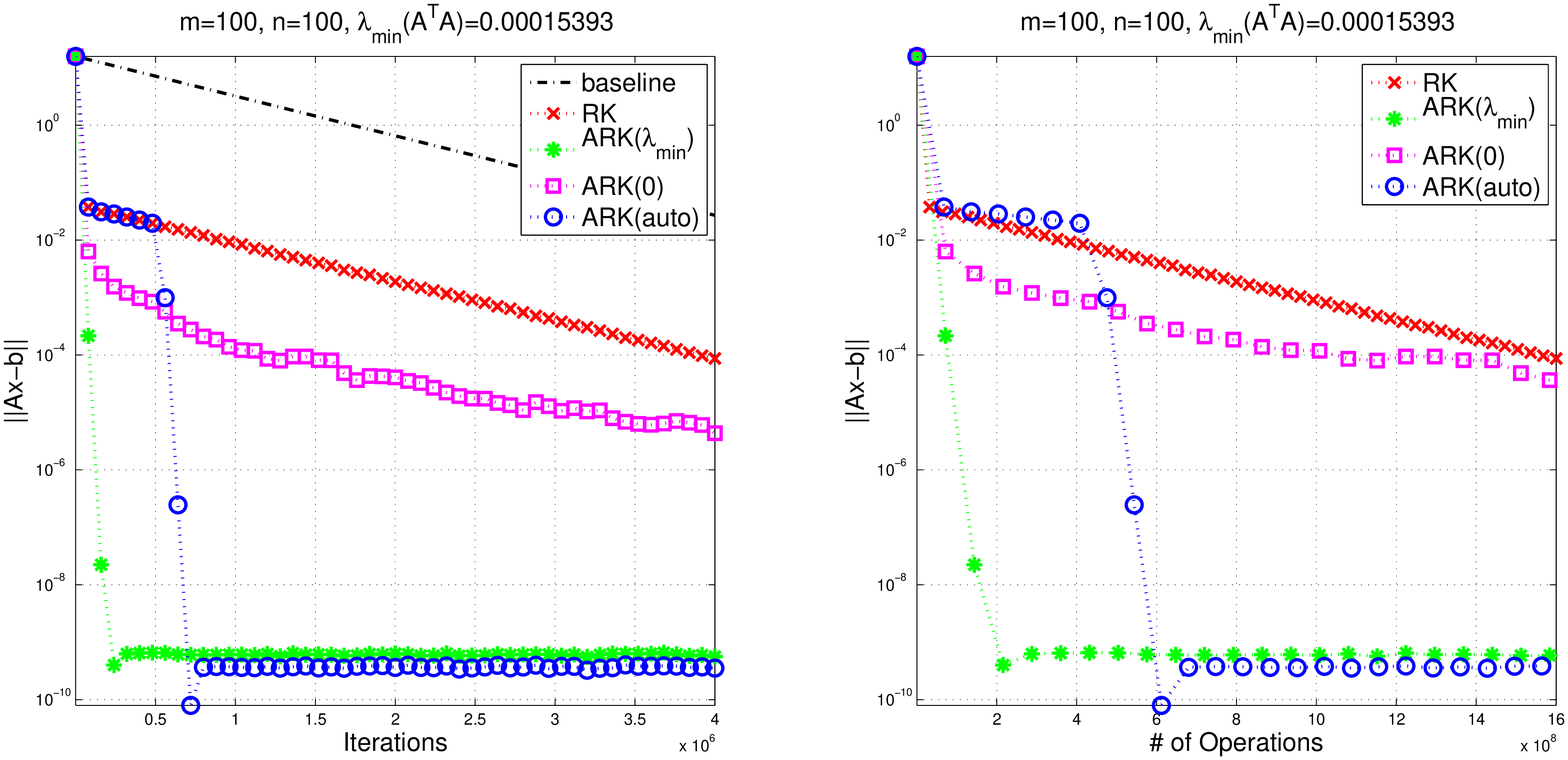}}
    \caption{Comparison among $\RK$, $\ARK(\lambdamin)$, $\ARK(0)$,
      and $\ARK(\mbox{\rm auto})$ on the dense data for $m=100$ and
      $n=50$, $80$, $100$. The graphs on the left (right) column plot
      iterations (operations) against residual error, averaged over 20
      trials. The left graphs show a reference baseline sequence
      $\{(1-\lambdamin/m)^k:~k=0,1,\dotsc\}$.} \label{fig_e_1}
\end{figure}
\begin{figure}[htp!]
  \centering
      \subfloat{\includegraphics[width=0.8\textwidth]{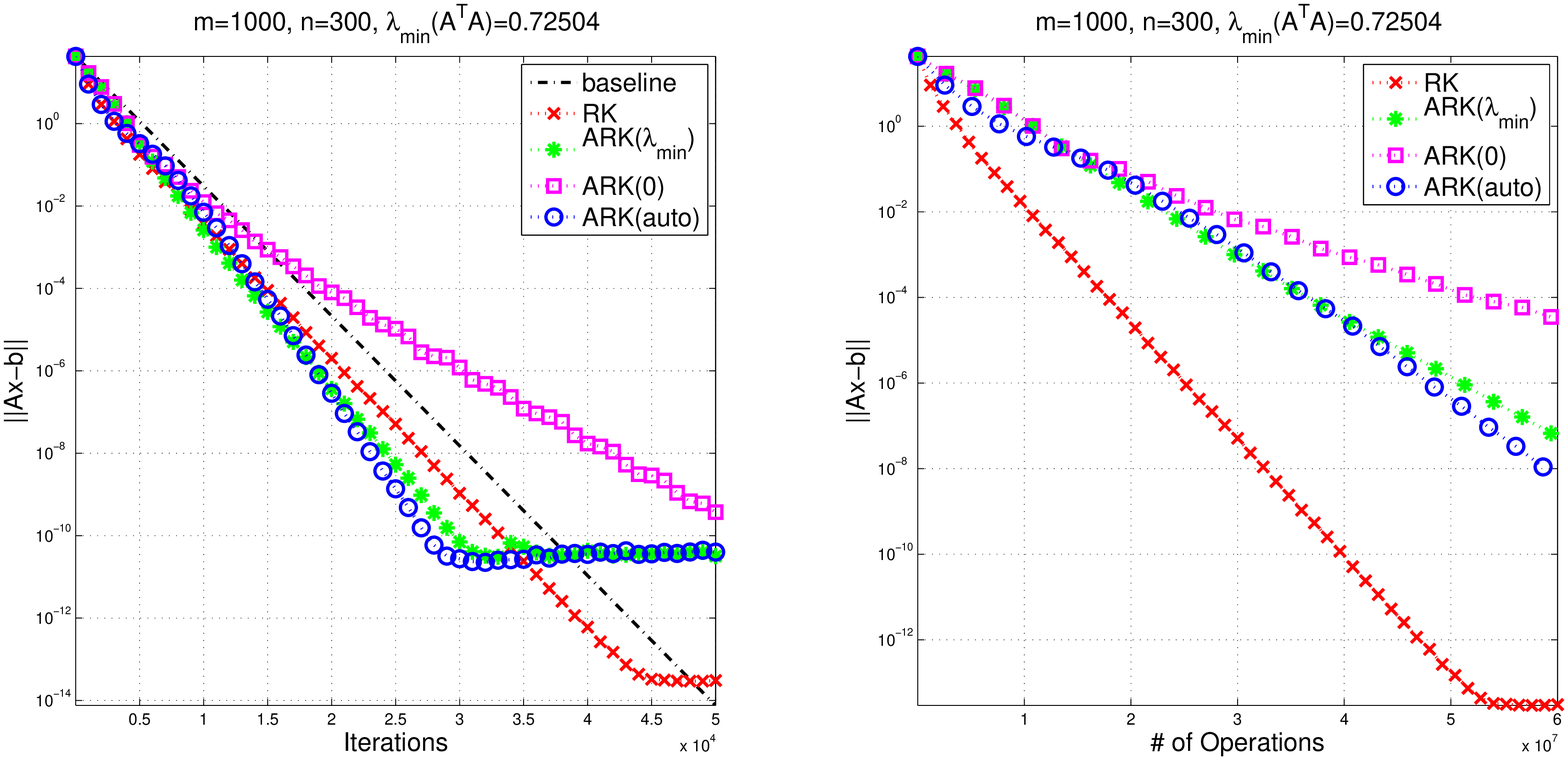}}\\
    \subfloat{\includegraphics[width=0.8\textwidth]{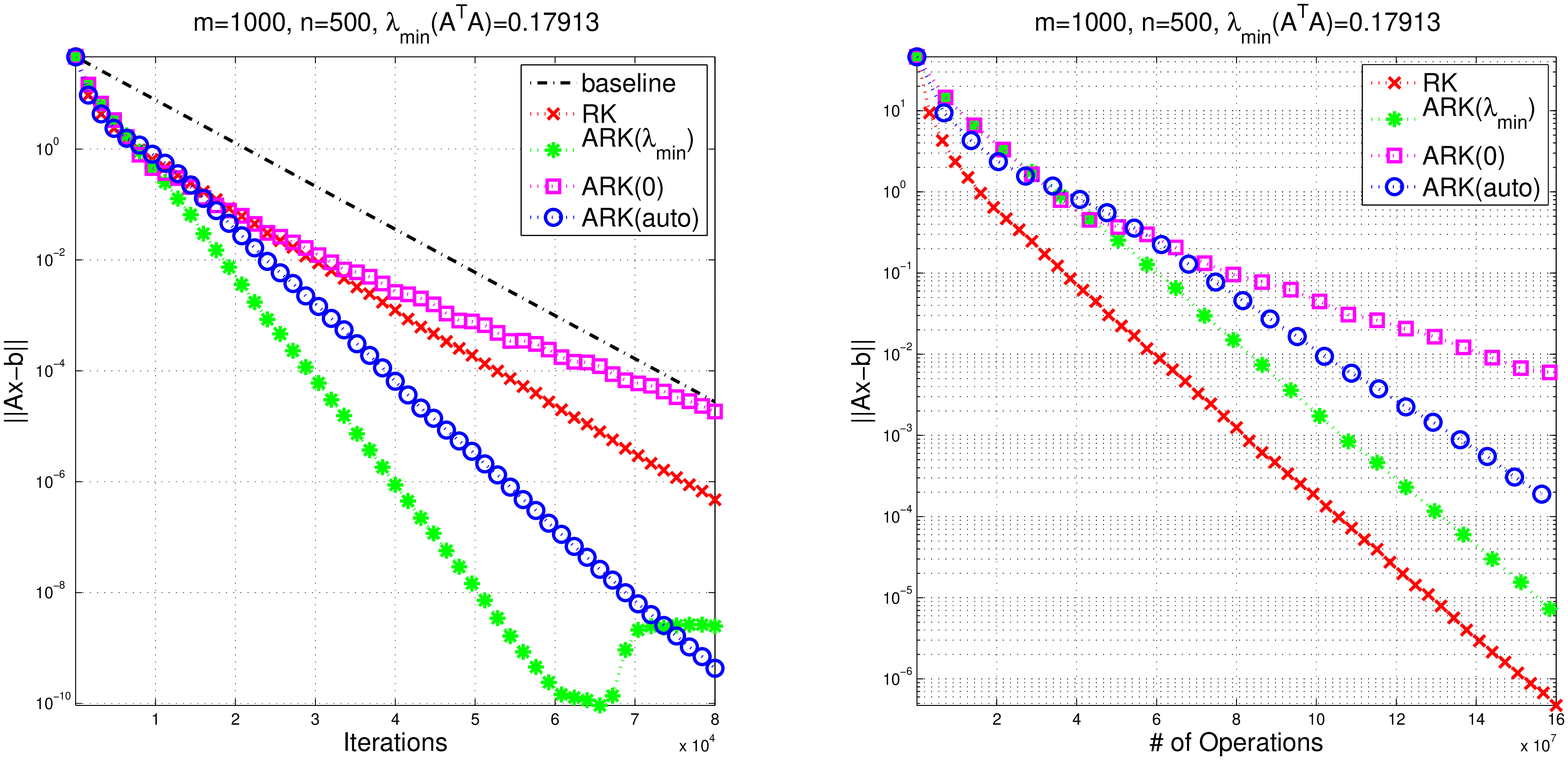}}\\
    \subfloat{\includegraphics[width=0.8\textwidth]{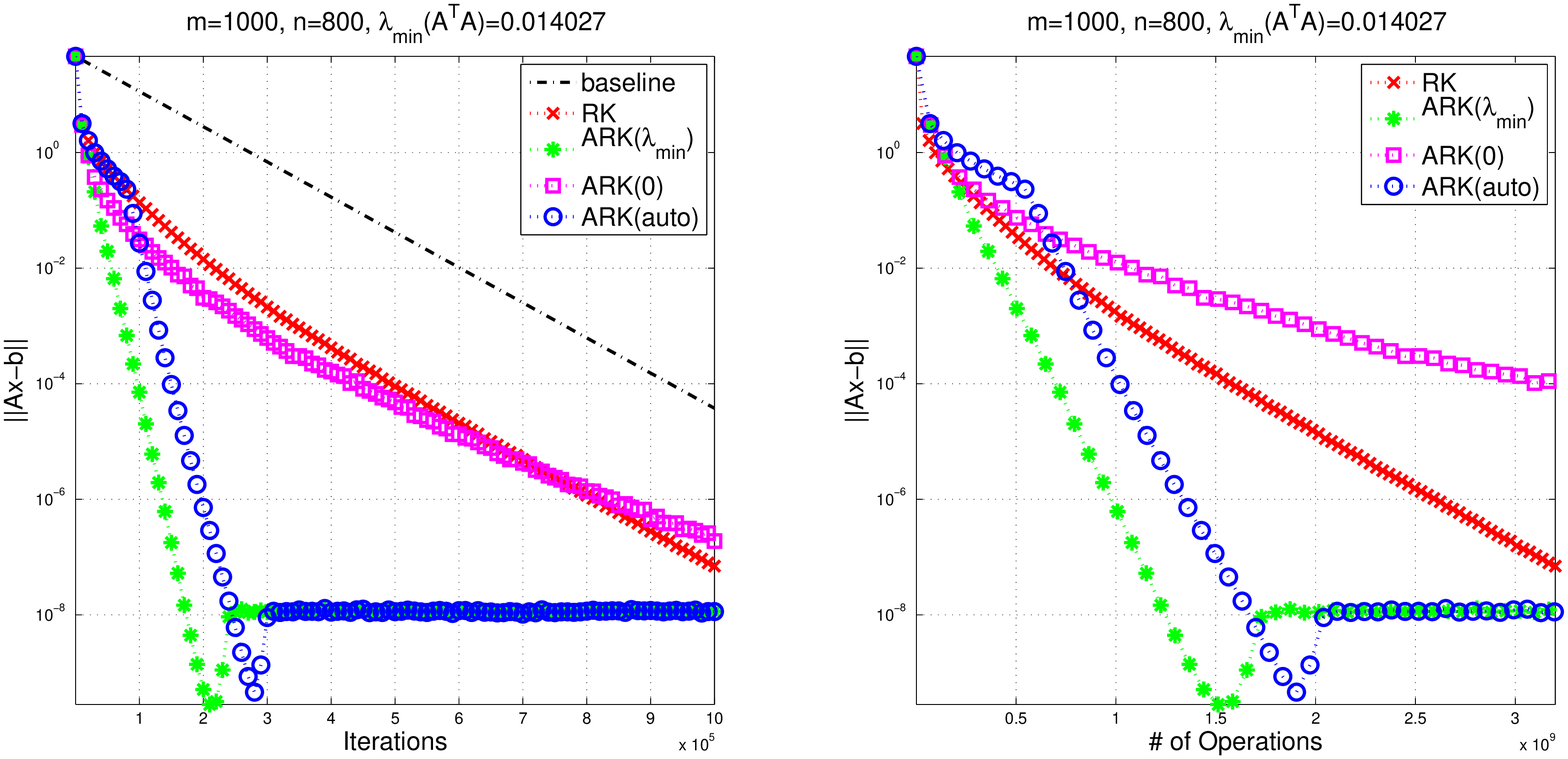}}
   \caption{Comparison among $\RK$, $\ARK(\lambdamin)$, $\ARK(0)$, and
     $\ARK(\mbox{\rm auto})$ on dense data for $m=1000$ and $n=300$,
     $500$, $800$. The graphs on the left (right) plot iterations
     (operations) against residual error, averaged over 20 trials. A
     reference baselien showing $\{(1-{\lambdamin/
       m})^k:~k=0,1,\dotsc\}$ is shown in the left
     plots.}\label{fig_e_2}
\end{figure}

\subsection{Comparison among $\RK$, $\ARK$, and $\SARK$ for Sparse Data}

We compare $\RK$, $\ARKauto$, and $\SARK$ on sparse data. Each element
of $A$ is set to $0$ with the probability $1-\delta$, so that the
proportion of nonzero entries in $A$ is approximately $\delta$. The
nonzero entries are chosen to be {\rcc i.i.d. Gaussian $\mathcal{N}(0,1)$,
then the zeros rows are removed from $A$ and the nonzero rows are
normalized.} The optimal solution $x^*$ and right-hand side $b$ are
generated as in the dense case.

Figure~\ref{fig_es_1} fixes $m=1000$ and $n=950$,
and chooses $\delta=0.8$, $0.08$, and $0.01$ for different levels of
sparsity. For this small value of $\lambdamin$ (about $.0006$ in all
three cases), $\ARK/\SARK$ outperforms $\RK$ with respect to number of
iterations, {\rcc as we see in the graphs in the left column of
  Figure~\ref{fig_es_1}}. For the highest density ($\delta=0.8$; top
right graph), both $\ARK$ and $\SARK$ take fewer operations than
$\RK$, and $\ARK$ is more efficient than $\SARK$. For moderate
sparsity $\delta = 0.08$ (middle right), $\ARK$ is dominated by $\RK$
in operation count, while $\SARK$ is the best option of the three. For
the most sparse case ($\delta=0.01$; bottom right), $\RK$ dominates
both $\ARK$ and $\SARK$ in the number of operations. These
observations are consistent with our analysis of
Section~\ref{sec:equality_cmp_sparse}.

\begin{figure}[htp!]
\centering
       \subfloat{\includegraphics[width=0.8\textwidth]{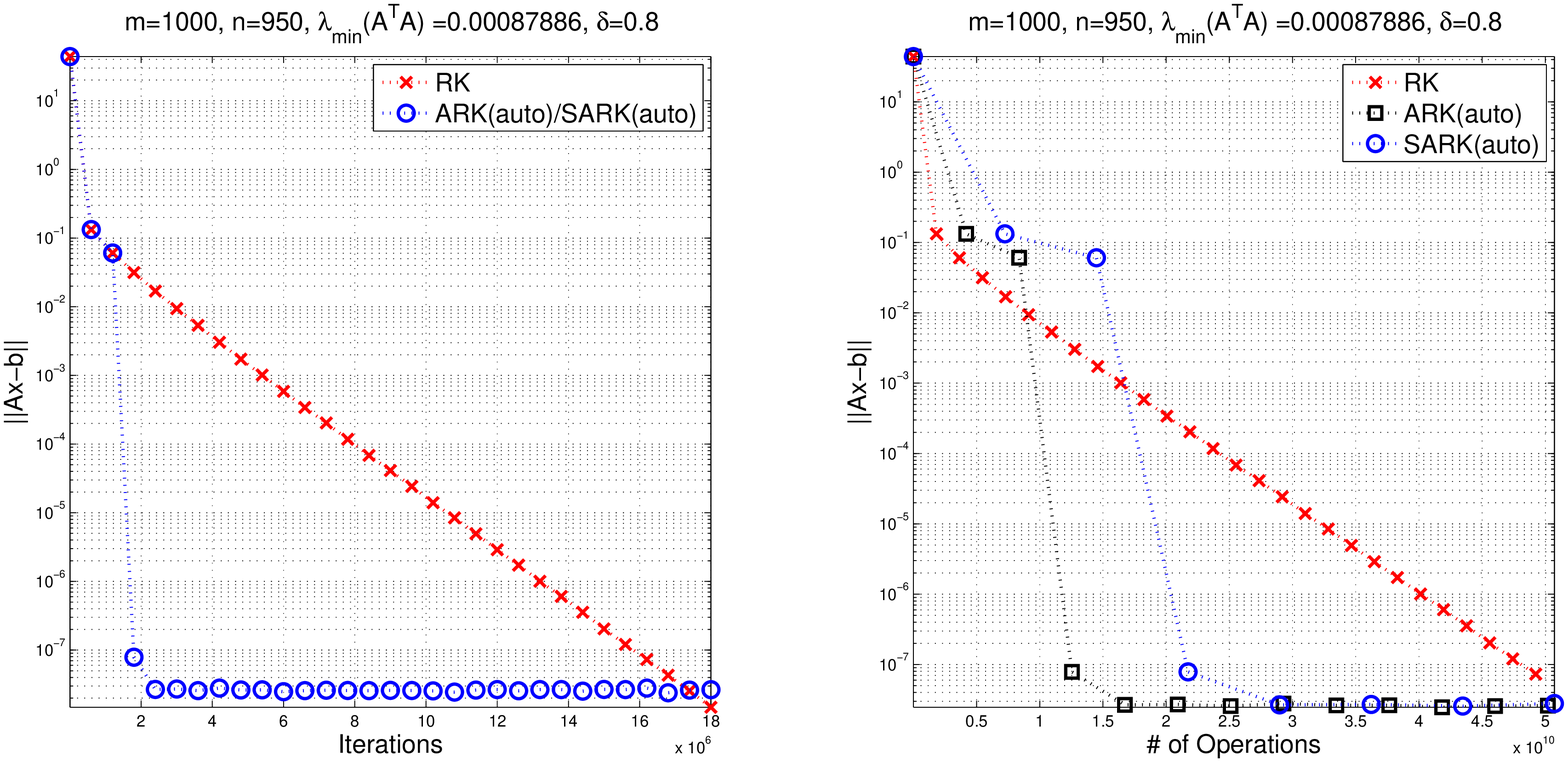}}\\
       \subfloat{\includegraphics[width=0.8\textwidth]{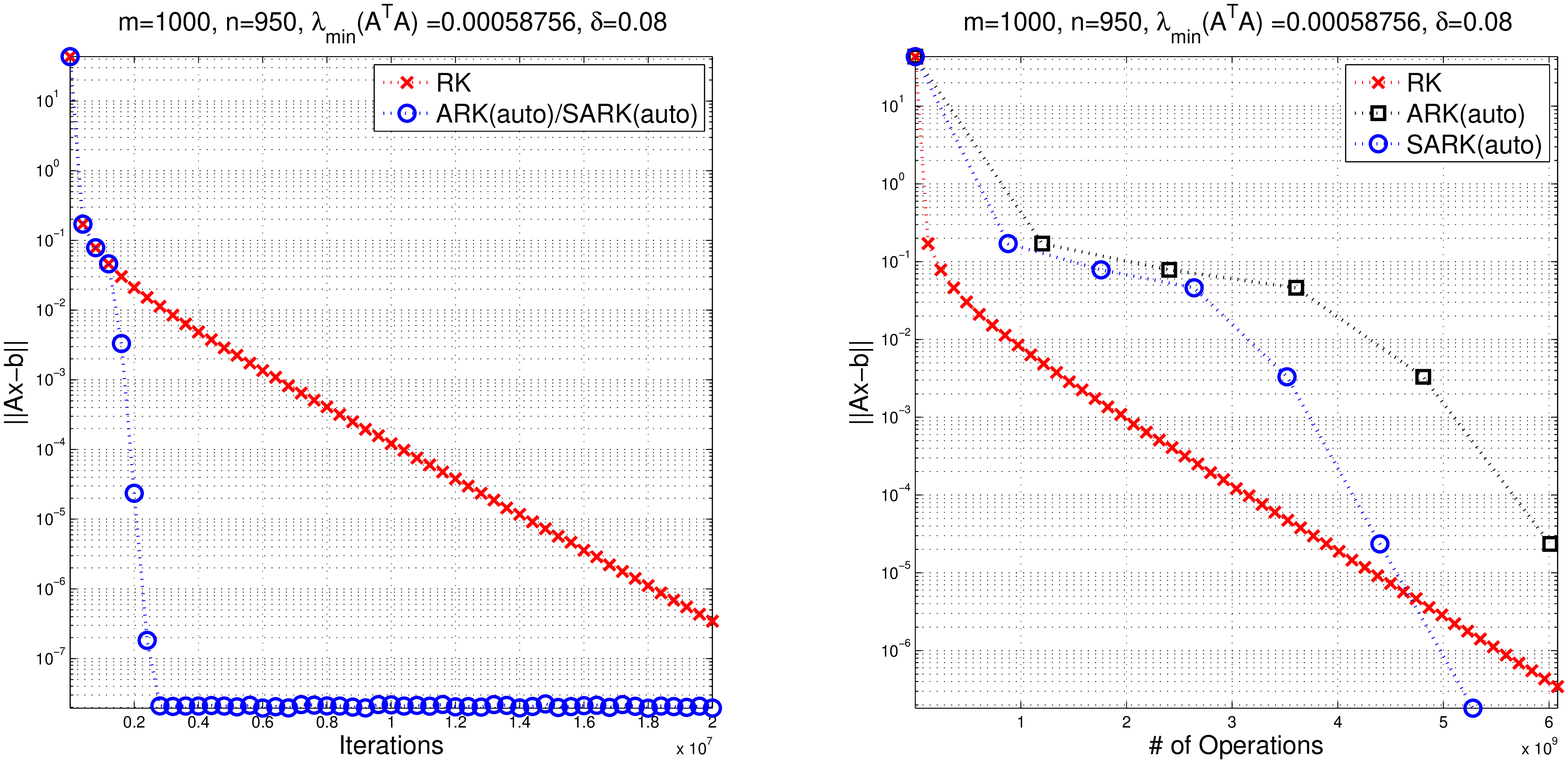}}\\
    \subfloat{\includegraphics[width=0.8\textwidth]{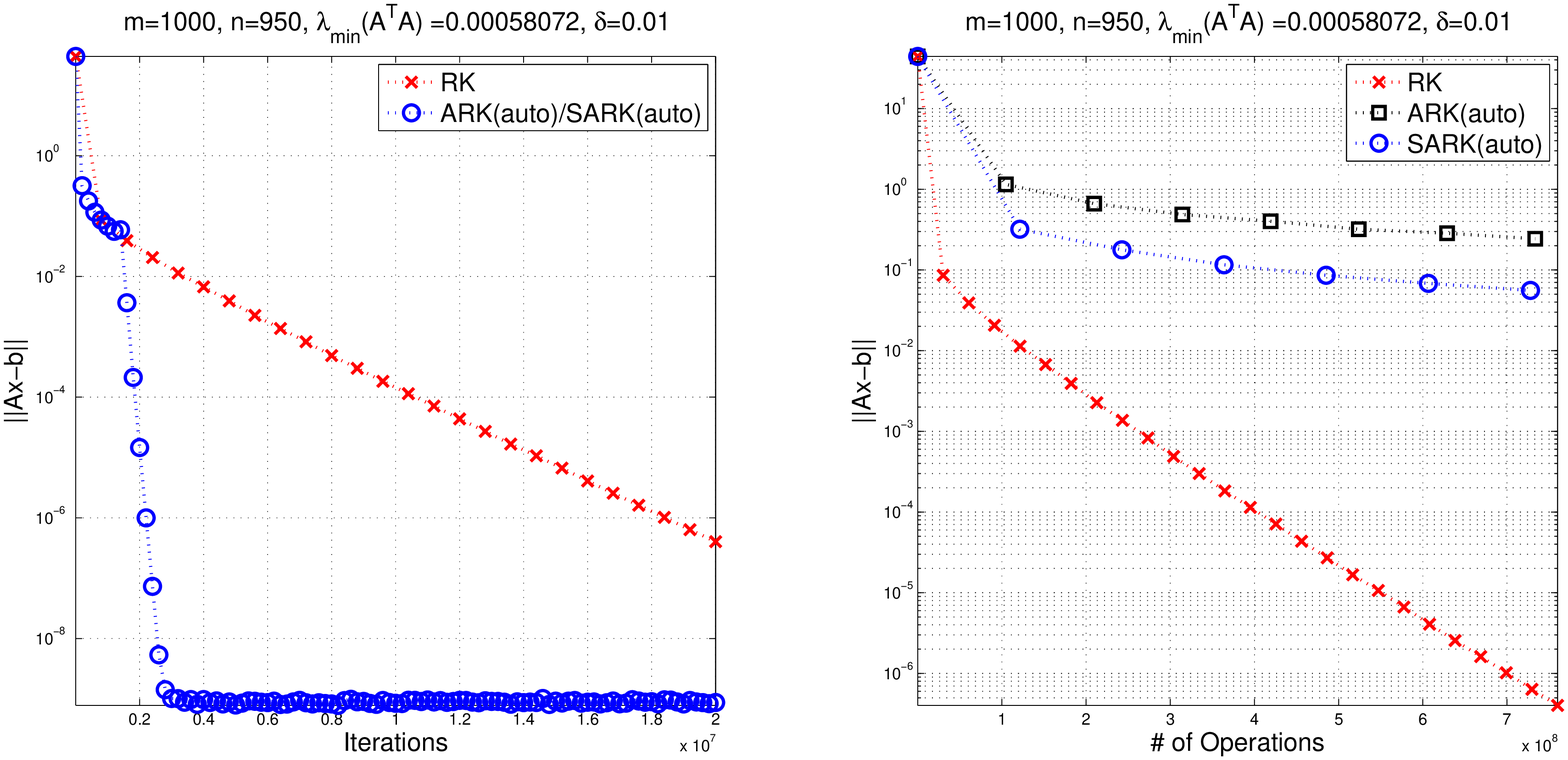}}
\caption{Comparison among $\RK$, $\ARK(0)$, and $\SARKauto$ on sparse
  data with $m=1000$, $n=950$, and $\delta = 0.01$, $0.08$, and
  $0.8$. The graphs on the left (right) column plot iterations
  (operations) against residual errors, averaged over 20 trials.}
\label{fig_es_1}
\end{figure}

\subsection{Comparison among $\RK$, $\ARK$, and $\CG$}


{\rcc
A comparison between $\CG$ and $\RK$ was made in \cite{Strohmer09},
where $A$ is chosen to be Gaussian (elements are i.i.d. from
$N(0,1/n)$) with $m \gg n$. Problems of this type are particularly
advantageous for $\RK$. From random matrix theory \citep{Vershynin11},
we have for these matrices that $\lambdamin\approx (\sqrt{m/n}-1)^2$
and $\lambdamax\approx (\sqrt{m/n}+1)^2$, so that when $m\gg n$, we
have $\sqrt{\lambdamin\lambdamax}\gg 1$.} The convergence rates
observed in \citep{Strohmer09} are thus consistent with our analysis
of Section~\ref{sec:boundcmp_cg}. We do not consider the case $m\gg n$
further here, because $\lambdamin$ is large in this setting, so all
algorithms converge rapidly. We focus instead on cases in which $m=n$
and $A$ is ill conditioned.

For a given choice of $\lambdamin$, we see from
Section~\ref{sec:boundcmp_cg} that $\CG$ favors a smaller {\em
  maximum} eigenvalue, while $\RK$ and $\ARK$ favor {\rcc a smaller {\em
  geometric average} eigenvalue}. We control the distribution of
eigenvalues of $A^TA$ by generating our test matrices as
follows. First, find the SVD $U \Lambda V^T$ of a random $n \times n$
Gaussian matrix. Next, define an $n \times n$ diagonal matrix
$\tilde{\Lambda}$ by $\tilde{\Lambda}_{ii}=i^{-\alpha}$,
$i=1,2,\dotsc,n$, for some parameter $\alpha>0$, and compute
$U\tilde{\Lambda} V^T$. Finally, normalize the rows of this matrix to
obtain $A$. We generate $x^*$ and $b$ in the same way as in
Section~\ref{sec:NS_1}. The rows of $A$ are normalized, so $\trace
(A^TA)=n$ and the average eigenvalue of $A^TA$ is $1$. The parameter
$\alpha$ controls the distribution of eigenvalues of $A^TA$; as
$\alpha$ increases, $\lambdamax$ tends to grow while $\lambdamin$
shrinks.

We choose three values of $\alpha$ --- $0.5$, $0.75$, and $0.9$ ---
and fix $n=500$ in Figure~\ref{fig_e_CG_1}. {\rcc Each row of plots in
Figure~\ref{fig_e_CG_1} corresponds to a particular value of $\alpha$,
increasing from top to bottom. The left column plots the number of
iterations of each method, but since the complexity of $\CG$ per
iteration is $O(n^2)$ while that of other algorithms is $O(n)$, we do
a rough calibration by making each iteration of $\CG$ occupy $n$ units
on the horizontal axes of the graphs in this column. }
{\rcc We note that $\CG$ converges rapidly in its early iterations but
  then slows.} This behavior is consistent with the analysis of $\CG$,
which shows that the asymptotic rate \eqnok{eqn_cg_rate} is somewhat
pessimistic, and that early iterations tend to behave in a manner
dictated by the distribution of eigenvalues of $A^TA$ rather than the
ratio of the extreme eigenvalues. Rapid initial convergence is enabled
by the fact that each iteration of $\CG$ does a sweep over the entire
matrix, giving it a global view of the data which is lacking in the
randomized approaches.  By contrast with $\CG$ the convergence of
randomized algorithms is consistent and stable, and well predicted by
the analysis.

{\rcc
As the value of $\alpha$ increases (that is, as we move from the top
row of plots to the bottom row in Figure~\ref{fig_e_CG_1}), we observe
the following changes.}
\begin{itemize}
\item[-] $\lambdamin$ becomes smaller, $\lambdamax$ becomes larger, and
  $\sqrt{\lambdamin \lambdamax}$ becomes smaller, as $\alpha$
  increases.
\item[-] The asymptotic convergence rate of $\CG$, after
  resolution of the leading eigenspaces, becomes slower as $\alpha$
  increases.
\item[-] The performance of $\RK$ becomes worse compared to
  $\CG$ as $\alpha$ grows. This observation is consistent
  with our analysis in Section~\ref{sec:boundcmp_cg}, which predicts
  poorer performance as $\sqrt{\lambdamin \lambdamax}$ decreases.
\item[-] {\rcc The performance of $\ARK$ (including $\ARK(\mumin)$ and
  $\ARK(\mbox{\rm auto})$) is comparable to $\CG$. $\CG$ decreases
  faster in the beginning but $\ARK$ is better at achieving high
  precision. An effective hybrid strategy might be to run $\CG$ in
  early iterations and turn to $\RK$ or $\ARK$ in later iterations.}
\end{itemize}

\begin{figure}[htp!]
\centering
       \subfloat{\includegraphics[width=0.8\textwidth]{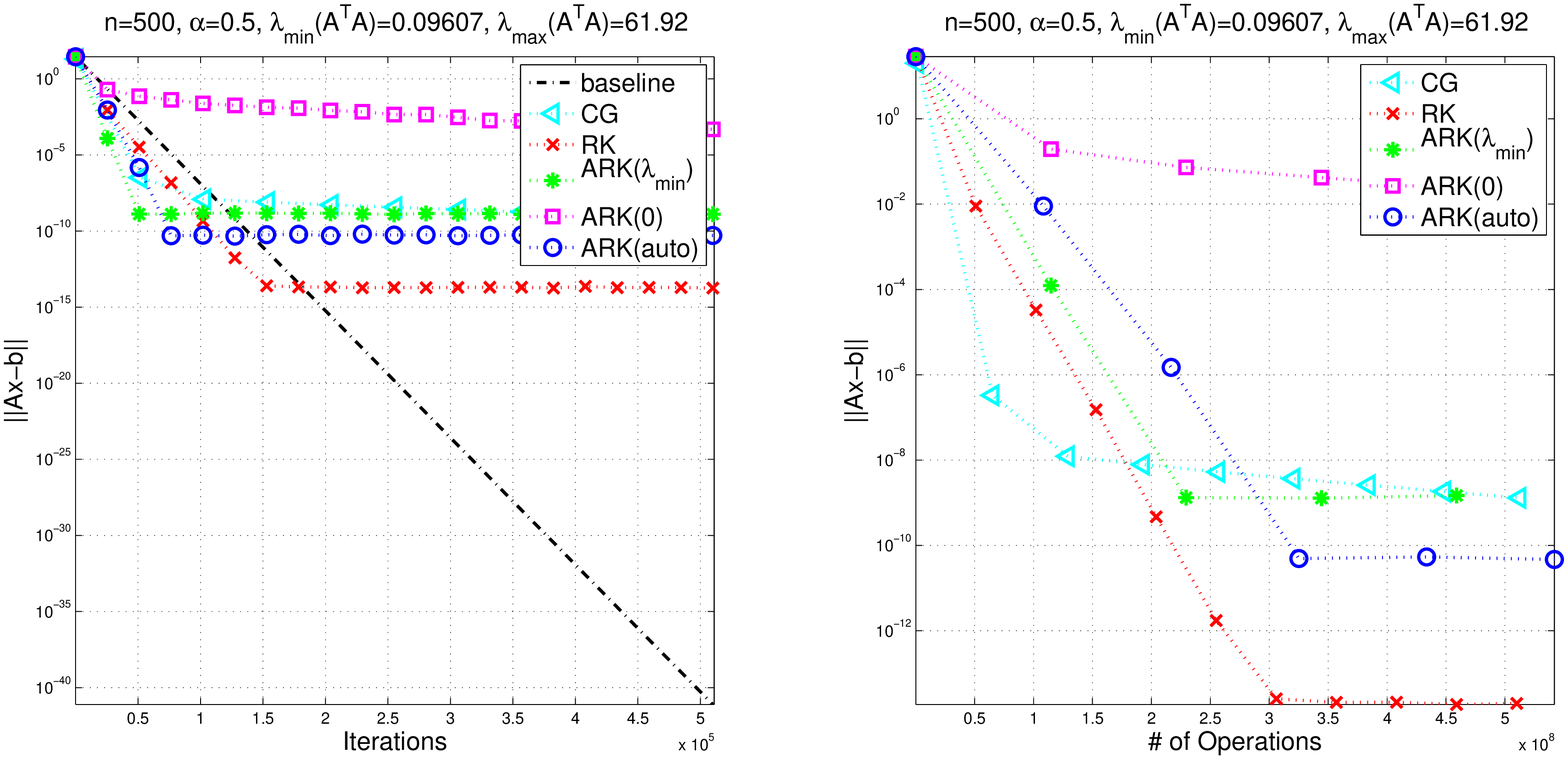}}\\
        \subfloat{\includegraphics[width=0.8\textwidth]{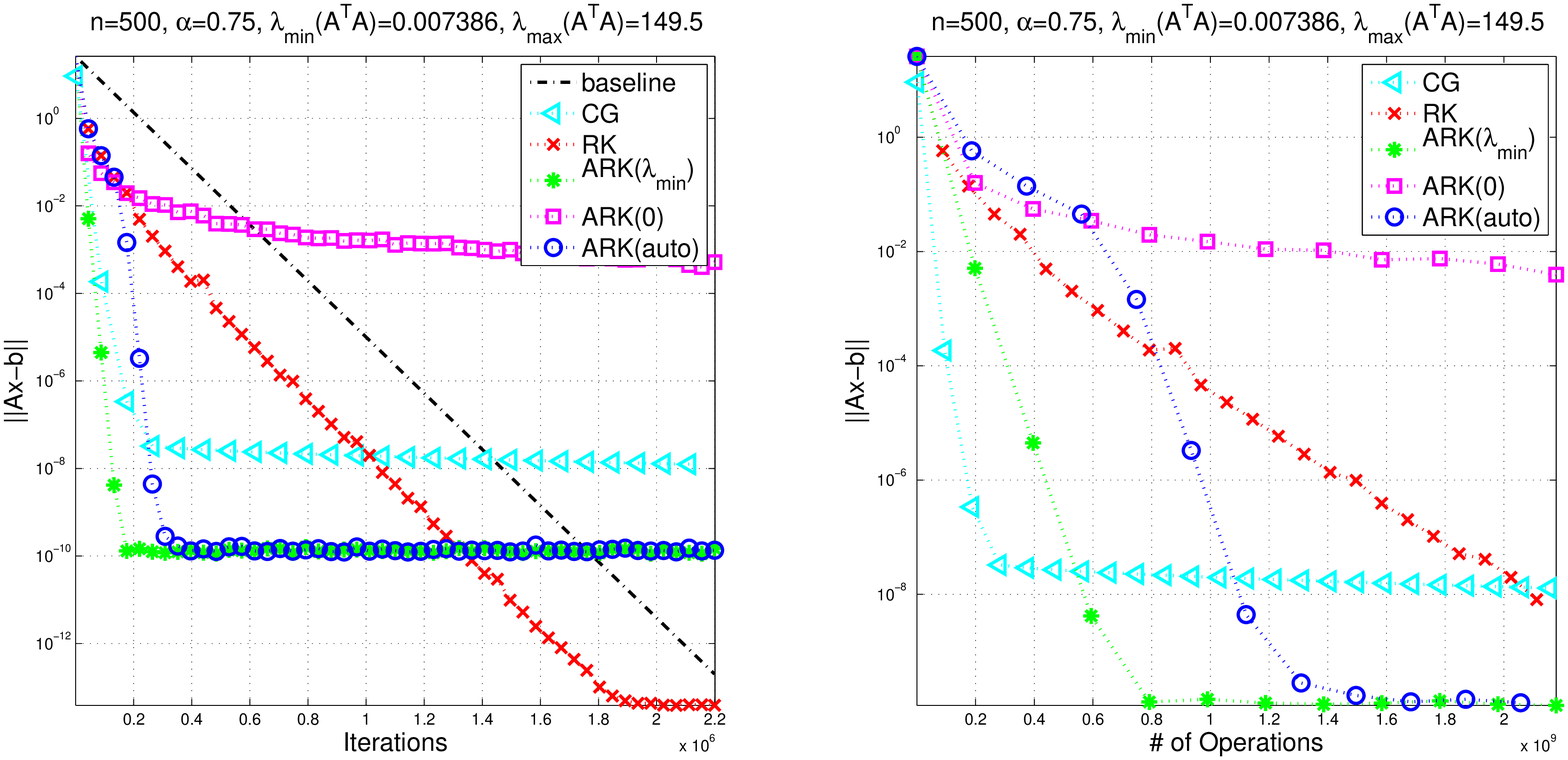}}\\
    \subfloat{\includegraphics[width=0.8\textwidth]{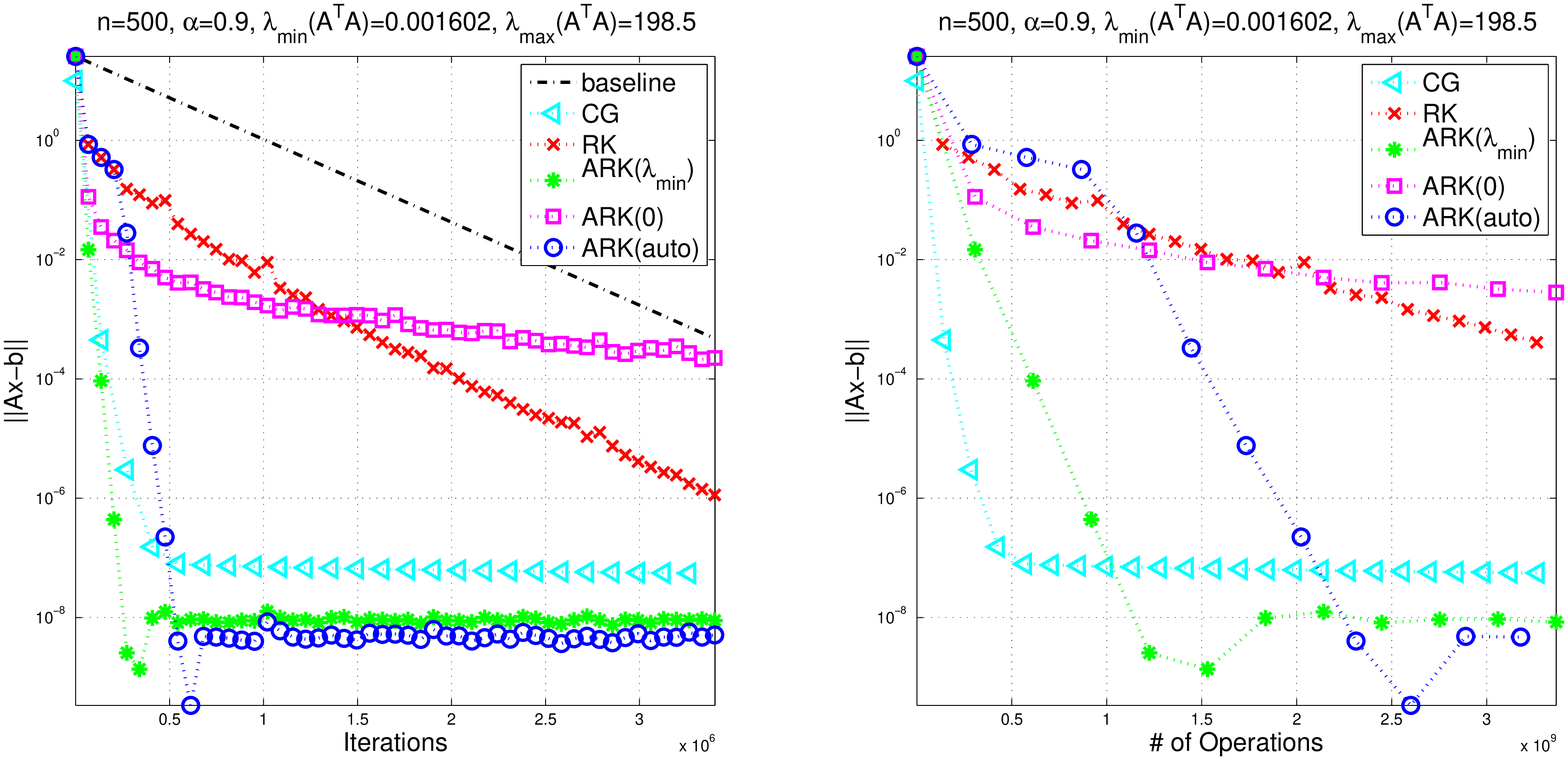}}
        \caption{Comparison amomg $\CG$, $\RK$,
          $\ARK(\lambdamin)$, $\ARK(0)$, and $\ARK(\mbox{\rm auto})$
          on dense data. The figures on the left (right) plot residual
          against iterations (operations). A reference baseline
          sequence of $\{(1-{\lambdamin / m})^k:~k=0,1,2,\dotsc\}$ is
          shown in the left plots.}
\label{fig_e_CG_1}
\end{figure}

{
\bibliographystyle{amsplain}
\bibliography{reference} 
}

\newpage

\appendix

\section{Proof of Theorem~\ref{thm_main}}

In proving Theorem~\ref{thm_main}, we refer to the particular
implementation in Algorithm~\ref{alg_arka} of $\ARK$. We assume
throughout that $\| a_i \|_2=1$ for $i=1,2,\dotsc,m$.


We start with two useful technical lemmas.
\begin{lemma}  \label{lem_1}
For any $y\in\R^{n}$, we have
\begin{equation}
\mathbb{E}_{i}\left(\left\|{a_i(a_i^Ty-b_i)}\right\|^2_{(A^TA)^+}\right)
\leq {1\over m}\|Ay-b\|^2,
\end{equation}
where the random variable $i$ follows the uniform distribution over
the set $\{1,2,\dotsc,m\}$.
\end{lemma}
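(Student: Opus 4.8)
The plan is to reduce the matrix-weighted quantity to an ordinary scalar expression and then bound a quadratic form in $(A^TA)^+$ by $1$, uniformly over the rows of $A$.

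First I would expand the norm using $\|v\|_M^2 = v^TMv$ for vectors $v$, so that for each $i$,
\[
\left\|a_i(a_i^Ty - b_i)\right\|^2_{(A^TA)^+} = (a_i^Ty-b_i)^2\, a_i^T(A^TA)^+a_i .
\]
Averaging over $i$ uniform on $\{1,\dotsc,m\}$ gives
\[
\mathbb{E}_i\left(\left\|a_i(a_i^Ty-b_i)\right\|^2_{(A^TA)^+}\right) = \frac1m\sum_{i=1}^m (a_i^Ty-b_i)^2\, a_i^T(A^TA)^+a_i ,
\]
and since $\frac1m\|Ay-b\|^2 = \frac1m\sum_{i=1}^m (a_i^Ty-b_i)^2$, it suffices to establish the pointwise inequality $a_i^T(A^TA)^+a_i \le 1$ for every $i$.

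For that step I would use the compact SVD $A = U\Sigma V^T$ from the Notation section (with $U^TU = V^TV = I$ and $\Sigma$ nonsingular diagonal), so that $(A^TA)^+ = V\Sigma^{-2}V^T$. Writing $a_i^T = e_i^TA$, where $e_i$ is the $i$th coordinate vector in $\mathbb{R}^m$, a short computation collapses the $\Sigma$ factors:
\[
a_i^T(A^TA)^+a_i = e_i^TA(A^TA)^+A^Te_i = e_i^T U\Sigma V^TV\Sigma^{-2}V^TV\Sigma U^Te_i = e_i^TUU^Te_i = \|U^Te_i\|^2 .
\]
Since $UU^T$ is an orthogonal projection (onto the range of $A$), its diagonal entries lie in $[0,1]$; in particular $\|U^Te_i\|^2 = e_i^TUU^Te_i \le e_i^Te_i = 1$. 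Substituting this bound into the sum above and comparing with $\frac1m\|Ay-b\|^2$ finishes the proof.

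The argument is elementary and I do not anticipate a genuine obstacle; the only point requiring care is recognizing that $A(A^TA)^+A^T$ is exactly the orthogonal projector $UU^T$, so that the ``leverage scores'' $a_i^T(A^TA)^+a_i$ are uniformly bounded by $1$. It is worth noting in passing that the row-normalization assumption $\|a_i\|_2 = 1$ is not actually used in this particular inequality; it enters elsewhere in the analysis rather than here.
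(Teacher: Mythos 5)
Your proof is correct and follows essentially the same route as the paper: both reduce the claim, via the compact SVD, to the fact that the leverage scores $a_i^T(A^TA)^+a_i = \|U^Te_i\|^2$ (the squared row norms of $U$) are at most $1$, the paper organizing this through a trace/Frobenius-norm identity while you argue pointwise per row. Your closing remark that the normalization $\|a_i\|=1$ is not needed for this particular inequality is also accurate.
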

\begin{proof}
Define the compact singular value decomposition of $A$ as $A=U\Sigma
V^T$, where $U^TU=I$, $V^TV=I$, and $\Sigma$ is positive diagonal, so
that $(A^TA)^+ = V \Sigma^{-2} V^T$.  Denoting $U^T = [u_1 \, u_2 \,
  \dotsc \, u_m]$, it is easy to show that $\|u_i \|_2 \le 1$ for all
$i=1,2,\dotsc,m$. Using $\mathbb{E}_i$ to denote expectation with
respect to the index $i$, we have
\begin{align*}
&\mathbb{E}_{i}\left(\left\|{a_i(a_i^Ty-b_i)}\right\|^2_{(A^TA)^+}\right)\\
&={1\over m}\sum_{i=1}^m\langle (A^TA)^+a_i(a_i^Ty-b_i),~a_i(a_i^Ty-b_i) \rangle\\
&={1\over m} \trace \left[(A^TA)^+\sum_{i=1}^ma_i(a_i^Ty-b_i)^2a_i^T\right]\\
&={1\over m} \trace \left[(A^TA)^+A^T \diag (Ay-b)^2A\right]\\
&={1\over m} \trace \left[V \Sigma^{-1} U^T  \diag (Ay-b)^2 U \Sigma V^T\right]\\
&={1\over m} \trace \left[ U^T  \diag (Ay-b)^2 U \right]\\
&={1\over m}\| \diag (Ay-b)U\|_F^2\\
&={1\over m}\sum_{i=1}^m(a_i^Ty-b)^2\|u_i\|^2\\
&\leq {1\over m}\|Ay-b\|^2.
\end{align*}
\end{proof}

\begin{lemma}
For any solution $x^*$ to \eqref{eqn_problem} and
any $y\in \R^n$, we have
\begin{equation}
\mathbb{E}_{i}(\|\mathcal{P}_{a_i,b_i}(y)-x^*\|^2)=\|y-x^*\|^2-{1\over
m}\|Ay-b\|^2,
\end{equation}
where the random variable $i$ follows the uniform distribution over
the set $\{1,2,\dotsc,m\}$. \label{lem_2}
\end{lemma}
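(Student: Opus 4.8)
\textbf{Proof proposal for Lemma~\ref{lem_2}.}
The plan is to reduce the projection step to an orthogonal projection of the error vector $y-x^*$, and then apply the Pythagorean identity. First I would use the normalization assumption $\|a_i\|_2=1$ to write
\[
\mathcal{P}_{a_i,b_i}(y) = y - a_i(a_i^Ty - b_i),
\]
and then exploit consistency: since $x^*$ solves $Ax=b$, we have $a_i^Tx^* = b_i$, so $a_i^Ty - b_i = a_i^T(y-x^*)$. Substituting this in gives
\[
\mathcal{P}_{a_i,b_i}(y) - x^* = (y-x^*) - a_i a_i^T(y-x^*) = (I - a_i a_i^T)(y-x^*).
\]

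Next I would observe that, because $\|a_i\|_2=1$, the matrix $I - a_ia_i^T$ is the orthogonal projector onto the hyperplane $\{z : a_i^Tz = 0\}$, and $a_i a_i^T$ is the complementary rank-one projector. The Pythagorean identity then yields
\[
\|\mathcal{P}_{a_i,b_i}(y) - x^*\|^2 = \|y-x^*\|^2 - \|a_i a_i^T(y-x^*)\|^2 = \|y-x^*\|^2 - \bigl(a_i^T(y-x^*)\bigr)^2,
\]
where the last step again uses $\|a_i\|_2=1$.

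Finally I would take the expectation over $i$ uniform on $\{1,\dots,m\}$, giving
\[
\mathbb{E}_i\bigl(\|\mathcal{P}_{a_i,b_i}(y) - x^*\|^2\bigr) = \|y-x^*\|^2 - \frac{1}{m}\sum_{i=1}^m \bigl(a_i^T(y-x^*)\bigr)^2 = \|y-x^*\|^2 - \frac{1}{m}\|A(y-x^*)\|^2,
\]
and then conclude using $A(y-x^*) = Ay - Ax^* = Ay - b$. There is no real obstacle here; the only point requiring care is the use of \emph{consistency} (so that $a_i^Tx^*=b_i$) together with \emph{normalization} (so that $I-a_ia_i^T$ is a genuine orthogonal projector and $\|a_ia_i^Tv\|^2 = (a_i^Tv)^2$). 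Both are standing assumptions in the paper, so the proof is a short computation.
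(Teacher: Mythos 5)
Your proof is correct and is essentially the paper's argument: both amount to expanding the square of $y - a_i(a_i^Ty-b_i) - x^*$ and using normalization $\|a_i\|=1$ together with consistency $Ax^*=b$ before averaging over $i$. The only cosmetic difference is that you invoke $a_i^Tx^*=b_i$ up front so the identity $\|\mathcal{P}_{a_i,b_i}(y)-x^*\|^2=\|y-x^*\|^2-(a_i^T(y-x^*))^2$ holds pointwise via the Pythagorean theorem, whereas the paper expands first and lets the cross term cancel after taking the expectation.
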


\begin{proof}
We have
\begin{align*}
&\mathbb{E}_{i}(\|\mathcal{P}_{a_i,b_i}(y)-x^*\|^2)\\
&=\mathbb{E}_{i}\left(\left\|y-{a_i(a_i^Ty-b_i)}-x^*\right\|^2\right)\\
&=\|y-x^*\|^2 +
\mathbb{E}_{i}\left({\|a_i^Ty-b_i\|^2}\right)-2\left\langle
y-x^*,~\mathbb{E}_{i}\left({a_i(a_i^Ty-b_i)}\right) \right\rangle\\
&=\|y-x^*\|^2 + {1\over m}\|Ay-b\|^2 - {2\over m}\left\langle
A(y-x^*),
Ay-b \right\rangle\\
&=\|y-x^*\|^2-{1\over m}\|Ay-b\|^2,
\end{align*}
where the  last equality uses $Ax^*=b$.
\end{proof}

The proof of Theorem~\ref{thm_main} below essentially follows the
proof for accelerated coordinate descent algorithm in
\cite{Nesterov10} to construct the key inequality
\eqref{eqn_proof_key}.

\begin{proof}
From Algorithm~\ref{alg_arka} one can verify that if the sequence
$\{x_k, y_k, v_k\}$ is generated from $\ARK(A, b, \mumin, x_0, K)$,
then the sequence generated from $\ARK(A, b-Ax_0, \mumin, 0, K)$ must
be $\{x_k-x_0, y_k-x_0, v_k-x_0\}$.  Thus, solving $Ax=b$ is
equivalent to solving $Ax=b-Ax_0$ from initial point $0$. It therefore
suffices to study convergence from the zero initial point.

Recall from \eqref{eqn_alg1} that $\gamma_k$ is the larger root of the
following convex quadratic function:
\[
t(\gamma) := \gamma^2 - \frac{\gamma }{m} (1-\mumin \gamma_{k-1}^2) -
\gamma_{k-1}^2.
\]
Since $\mumin \le \lambdamin \le m$, and using $\gamma_{-1}=0$, we can
note the following, from a simple recursive argument:
\[
t(0) = -\gamma_{k-1}^2  \le 0, \quad
t(1/m) = \gamma_{k-1}^2 (\mumin/m^2-1) \le 0,
\]
and thus $\gamma_k \ge 1/m$ for all $k \ge 0$. We can also verify that
if $\gamma_{k-1} \le 1/ \sqrt{\mumin}$, we have
\begin{align*}
t(\gamma_{k-1}) &= -(\gamma_{k-1}/m) (1-\mumin \gamma_{k-1}^2)  \le 0 \\
t \left( \frac{1}{\sqrt{\mumin}} \right)
& = \frac{1}{\mumin} - \frac{1}{m \sqrt{\mumin}}
(1-\mumin \gamma_{k-1}^2) - \gamma_{k-1}^2 \\
& = \frac{1}{\mumin}  - \frac{1}{m \sqrt{\mumin}} +
\gamma_{k-1}^2 \left( \frac{\sqrt{\mumin}}{m}-1\right) \\
& \ge \frac{1}{\mumin} - \frac{1}{m \sqrt{\mumin}} +
\frac{1}{\mumin} \left( \frac{\sqrt{\mumin}}{m}-1\right) =0,
\end{align*}
which together imply that
\[
\gamma_k \in \left[ \gamma_{k-1}, \frac{1}{\sqrt{\mumin}} \right].
\]
It follows from these bound (together with the initialization
$\gamma_{-1}=0$) that $\{ \gamma_k \}_{k=0}^{\infty}$ is an increasing
sequence, bounded below by $1/m$ and above by $1/\sqrt{\mumin}$. It
follows from these bounds and from $\mumin \le m$ that $\alpha_k$ and
$\beta_k$ both lie in the interval $[0,1]$ for all $k$.


Recalling that $x_0=0$, we have $x^* = A^+b$. It can be verified that
$x_k$, $y_k$, $v_k$, and $x^*$ are all in $\mathcal{R}(A^T)$. 
We observe some useful relationships among the scalars in the
algorithm.  We have from \eqref{eqn_alg1} and \eqref{eqn_alg2} that
\begin{equation} \label{eqn_r21}
\frac{1-\alpha_k}{\alpha_k} = \frac{m^2 \gamma_k - m}{m-\gamma_k \mumin}
= \frac{m}{\gamma_k} \frac{m \gamma_k^2 - \gamma_k}{m-\gamma_k \mumin} =
\frac{m \gamma_{k-1}^2}{\gamma_k} .
\end{equation}
From \eqref{eqn_alg1} and \eqref{eqn_alg3}, we have
\begin{equation} \label{eqn_r20}
\gamma_k^2-{\gamma_k\over m} - \beta_{k}\gamma_{k-1}^2=0.
\end{equation}


Defining
\begin{equation} \label{eqn_defr}
r_k:=\|v_k-x^*\|_{(A^TA)^+},
\end{equation}
we consider the following expansion of $r_{k+1}^2$.
\begin{align}
\nonumber
  r_{k+1}^2 &=\|v_{k+1}-x^*\|_{(A^TA)^+}^2\\
\nonumber
  &=\left\|\beta_kv_k+(1-\beta_k)y_k-\gamma_k{a_i(a_i^Ty_k-b_i)}-x^*\right\|_{(A^TA)^+}^2\\
\nonumber
  &=\|\beta_kv_k+(1-\beta_k)y_k-x^*\|^2_{(A^TA)^+} +
  \gamma_k^2\left\|{a_i(a_i^Ty_k-b_i)}\right\|^2_{(A^TA)^+} \\
\nonumber
  & \quad -2\gamma_k\left\langle
  \beta_kv_k+(1-\beta_k)y_k-x^*,~(A^TA)^+a_i(a_i^Ty_k-b_i)\right\rangle\\
\nonumber
  &=\|\beta_kv_k+(1-\beta_k)y_k-x^*\|^2_{(A^TA)^+} +
  \gamma_k^2\left\|{a_i(a_i^Ty_k-b_i)}\right\|^2_{(A^TA)^+} \\
\nonumber
  & \quad -2\gamma_k\left\langle
  \beta_k\left({1\over \alpha_k}y_k-{1-\alpha_k\over \alpha_k}x_k\right)+(1-\beta_k)y_k-x^*,~{(A^TA)^+a_i(a_i^Ty_k-b_i)}\right\rangle\\
\nonumber
 &{\rcc =} \|\beta_kv_k+(1-\beta_k)y_k-x^*\|^2_{(A^TA)^+} +
  \gamma_k^2\left\|{a_i(a_i^Ty_k-b_i)}\right\|^2_{(A^TA)^+} \\
\label{eqn_r}
  & \quad +2\gamma_k\left\langle
   x^*-y_k+{1-\alpha_k\over \alpha_k}\beta_k(x_k-y_k),~{(A^TA)^+a_i(a_i^Ty_k-b_i)}\right\rangle.
\end{align}
Denote by $i(k)$ the index randomly generated at iteration $k$, and
let $I(k)$ denote all random indices seen at or before iteration $k$,
that is,
\[
I(k):=\{i(k), i(k-1), \dotsc, i(0)\}.
\]
Note that $x_{k+1}$, $y_{k+1}$, and $v_{k+1}$ are determined by
$I(k)$. In the remainder of the proof, we use
$\mathbb{E}_{i(k)|I(k-1)}(\cdot)$ to denote the expectation of a
random variable with respect to the index $i(k)$, conditioned on
$I(k-1)$. Note that $\mathbb{E}_{I(k)} (\cdot) = E_{I(k-1)} (
E_{i(k)|I(k-1)} (\cdot))$. When the context is clear, we use $i$ in
place of $i(k)$.

We consider the three terms in \eqref{eqn_r} in turn. From the
convexity of $\|.\|^2_{(A^TA)^+}$ and the definition of $\beta_k$, the
first item can be bounded as follows:
\begin{align}
\nonumber
&\|\beta_kv_k+(1-\beta_k)y_k-x^*\|^2_{(A^TA)^+}\\
\nonumber
& \leq  \beta_k\|v_k-x^*\|^2_{(A^TA)^+} + (1-\beta_k)\|y_k-x^*\|^2_{(A^TA)^+} \\
\nonumber
& =  \beta_k\|v_k-x^*\|^2_{(A^TA)^+} + {\gamma_k\mumin\over m}\|y_k-x^*\|^2_{(A^TA)^+}\\
\label{eqn_r1}
& \leq \beta_k\|v_k-x^*\|^2_{(A^TA)^+} + {\gamma_k\over m}\|y_k-x^*\|^2,
\end{align}
where the last inequality is a consequence of $\mumin\leq
\lambdamin$ {\rcc and the fact that $y_k$ and $x^*$ are in $\mathcal{R}(A^T)$}. Using Lemmas~\ref{lem_1} and \ref{lem_2}, the
second item in \eqref{eqn_r} can be bounded in the expectation sense
as follows:
\begin{align}
\nonumber
 \mathbb{E}_{i(k)|I(k-1)} & \left(\left\|{a_i(a_i^Ty_k-b_i)}\right\|^2_{(A^TA)^+}\right) \\
& \leq {1\over m}\|Ay_k-b\|^2
\label{eqn_r2}
= \|y_k-x^*\|^2-\mathbb{E}_{i(k)|I(k-1)}(\|x_{k+1}-x^*\|^2).
\end{align}
For the third term in \eqref{eqn_r}, we have by taking an expectation that
\begin{align}
\nonumber
 &\mathbb{E}_{i(k)|I(k-1)}\left\langle x^*-y_k+{1-\alpha_k\over \alpha_k}\beta_k(x_k-y_k),~(A^TA)^+\left(a_i(a_i^Ty_k-b_i)\right) \right\rangle \\
\nonumber
& =  \left\langle x^*-y_k+{1-\alpha_k\over \alpha_k}\beta_k(x_k-y_k),~(A^TA)^+\mathbb{E}_{i(k)|I(k-1)}\left(a_i(a_i^Ty_k-b_i)\right) \right\rangle \\
\nonumber
& = {1\over m}\left\langle x^*-y_k+{1-\alpha_k\over \alpha_k}\beta_k(x_k-y_k),~(A^TA)^+\sum_{i}a_i(a_i^Ty_k-b_i) \right\rangle \\
\nonumber
& =  {1\over m}\left\langle x^*-y_k+{1-\alpha_k\over \alpha_k}\beta_k(x_k-y_k),~(A^TA)^+A^TA(y_k-x^*) \right\rangle \\
\nonumber
& \le {1\over m}\left\langle x^*-y_k+{1-\alpha_k\over \alpha_k}\beta_k(x_k-y_k),~y_k-x^* \right\rangle \\
\nonumber
& = {1\over m}\left(-\|y_k-x^*\|^2 + {1-\alpha_k\over \alpha_k}\beta_k\left\langle x_k-y_k,~y_k-x^* \right\rangle\right)\\
\nonumber
& = {1\over m}\left(-\|y_k-x^*\|^2 + {1-\alpha_k\over 2\alpha_k}\beta_k\left(\|x_k-x^*\|^2 -\|y_k-x^*\|^2 - \|x_k-y_k\|^2\right)\right)\\
\nonumber
& = {1\over m}\left(-\left(1+{1-\alpha_k\over 2\alpha_k}\beta_k\right)\|y_k-x^*\|^2 + {1-\alpha_k\over 2\alpha_k}\beta_k\left(\|x_k-x^*\|^2 -\|x_k-y_k\|^2\right)\right)\\
\nonumber
& = -\left({1\over m}+{\beta_k\gamma^2_{k-1}\over 2\gamma_k}\right)\|y_k-x^*\|^2 + {\beta_k\gamma_{k-1}^2\over 2\gamma_k}\left(\|x_k-x^*\|^2 -\|x_k-y_k\|^2\right)
\;\;  \mbox{\rm (from \eqref{eqn_r21})}
\\
\label{eqn_r3}
& \leq -\left({1\over m}+{\beta_k\gamma^2_{k-1}\over 2\gamma_k}\right)\|y_k-x^*\|^2 + {\beta_k\gamma_{k-1}^2\over 2\gamma_k} \|x_k-x^*\|^2.
\end{align}
By substituting \eqref{eqn_r1}, \eqref{eqn_r2}, and \eqref{eqn_r3}
into \eqref{eqn_r}, we obtain
\begin{align}
\nonumber
& \mathbb{E}_{i(k)|I(k-1)}(r^2_{k+1})\\
\nonumber
& \leq \beta_k\|v_k-x^*\|^2_{(A^TA)^+} + {\gamma_k\over m}\|y_k-x^*\|^2 \\
\nonumber
& \quad + {\gamma_k^2}(\|y_k-x^*\|^2-\mathbb{E}_{i(k)|I(k-1)}(\|x_{k+1}-x^*\|^2))\\
\nonumber
& \quad -\left({2\gamma_k\over m}+{\beta_k\gamma^2_{k-1} }\right)\|y_k-x^*\|^2 + {\beta_k\gamma_{k-1}^2} \|x_k-x^*\|^2\\
\nonumber
& \leq \beta_k\|v_k-x^*\|^2_{(A^TA)^+} - {\gamma_k^2}\mathbb{E}_{i(k)|I(k-1)}(\|x_{k+1}-x^*\|^2) + {\beta_k\gamma_{k-1}^2}\|x_k-x^*\|^2\\
\nonumber
& \quad +\left({\gamma_k^2}-{\gamma_k\over m} - {\beta_k\gamma_{k-1}^2} \right)\|y_k-x^*\|^2 \\
\label{eqn_thmproof1}
& =  \beta_k\|v_k-x^*\|^2_{(A^TA)^+} -
{\gamma_k^2}\mathbb{E}_{i(k)|I(k-1)}(\|x_{k+1}-x^*\|^2) +
{\beta_k\gamma_{k-1}^2}\|x_k-x^*\|^2,
\end{align}
where the final equality is a consequence of \eqref{eqn_r20}.

We now define two scalar sequences $\{A_k\}$ and $\{B_k\}$ as follows:
\begin{equation}
  A_k\geq 0,~B_k\geq 0,~B_0\neq 0,~B_{k+1}^2 = {B_k^2\over\beta_k},
  ~A_{k+1}^2=\gamma_{k}^2 B_{k+1}^2. \label{eqn_thmproof1_5}
\end{equation}
We set $A_0 = 0$ (to be consistent with the definition
\eqref{eqn_thmproof1_5} and the fact that $\gamma_{-1}=0$ in
Algorithm~\ref{alg_arka}) and note that $B_{k+1}\geq B_k$, since
$\beta_k \in (0,1]$. Since from \eqref{eqn_thmproof1_5} together with
\eqref{eqn_alg1} and \eqref{eqn_alg3}, we have
\[
A^2_{k+1}=\frac{B_k^2\gamma_k^2}{\beta_k}=
\frac{\gamma_k^2 A_k^2}{\beta_k \gamma_{k-1}^2}=
\frac{A_k^2\gamma_k^2}{\gamma_k^2-\gamma_k/m},
\]
we obtain that $\{A_k\}$ is also an increasing sequence.

Multiplying the last inequality \eqref{eqn_thmproof1} by $B_{k+1}^2$,
and using the definition of $r_k$ \eqref{eqn_defr} along with
\eqref{eqn_thmproof1_5} (in particular, the identities $B_{k+1}^2
\gamma_k^2 = A_{k+1}^2$, $B_{k+1}^2 \beta_k = B_k^2$, and $B_{k+1}^2
\beta_k \gamma_{k-1}^2 = A_k^2$), we obtain
\begin{align}
\nonumber
B_{k+1}^2\mathbb{E}_{i(k)|I(k-1)}(r_{k+1}^2) +
A_{k+1}^2\mathbb{E}_{i(k)|I(k-1)} & (\|x_{k+1}-x^*\|^2) \\
& \leq B_{k}^2 r_k^2 + A^2_{k}\|x_k-x^*\|^2. \label{eqn_proof_key}
\end{align}
It follows that
\begin{align*}
\mathbb{E}_{I(k)}(B_{k+1}^2 & r_{k+1}^2 +
A_{k+1}^2(\|x_{k+1}-x^*\|^2))\\
& = \mathbb{E}_{I(k-1)}(B_{k+1}^2\mathbb{E}_{i(k)|I(k-1)}(r_{k+1}^2) +
A_{k+1}^2\mathbb{E}_{i(k)|I(k-1)}(\|x_{k+1}-x^*\|^2))\\
& \leq \mathbb{E}_{I(k-1)}(B_{k}^2r_{k}^2 + A_{k}^2\|x_{k}-x^*\|^2).
\end{align*}
By applying this inequality recursively, we obtain
\begin{align*}
\mathbb{E}_{I(k)}(B_{k+1}^2r_{k+1}^2 +
A_{k+1}^2(\|x_{k+1}-x^*\|^2))
& \leq\mathbb{E}_{I(0)}(B_{1}^2r_{1}^2 +
A_{1}^2\|x_{1}-x^*\|^2)\\
& \leq B_0^2r_0^2 + A_0^2\|x_0-x^*\|^2=
B_0^2r_0^2,
\end{align*}
where we dropped the last term because $A_0=0$.  It follows from this
bound that
\begin{equation}
\mathbb{E}(r_{k+1}^2)\leq {B_0^2\over
B_{k+1}^2}r_0^2 \quad
\mbox{\rm and} \quad
\mathbb{E}(\|x_{k+1}-x^*\|^2)\leq
{B^2_0\over A_{k+1}^2}r^2_0. \label{eqn_thmproof4}
\end{equation}

We now need to estimate the growth of two sequences $\{A_k\}$ and
$\{B_k\}$. Here we follow the proof for the accelerated coordinate
descent algorithm of \cite{Nesterov10}, but spelling out some details
skipped in that paper. We have
\[
B_{k}^2=B_{k+1}^2\beta_k= \left( 1-{\mumin\over
  m}\gamma_k \right) B_{k+1}^2=\left(1-{\mumin A_{k+1}\over
  m B_{k+1}} \right) B_{k+1}^2,
\]
 which implies that
\[
{\mumin\over m}A_{k+1}B_{k+1}= B^2_{k+1}-B^2_k=
(B_k+B_{k+1})(B_{k+1}-B_k),
\]
so by recalling that $B_{k+1}\geq B_k$, we obtain
\begin{equation}
  B_{k+1} \geq B_k + {\mumin\over 2m}A_k.
  \label{eqn_thmproofb}
\end{equation}
We have
\begin{alignat*}{2}
{A^2_{k+1}\over B^2_{k+1}}-{A_{k+1}\over
B_{k+1}m}
& =\gamma_k^2-{\gamma_k\over m} \quad && (\text{from ~\eqref{eqn_thmproof1_5}})\\
& =(1-{\gamma_k\mumin \over
m})\gamma_{k-1}^2 \quad && (\text{from ~\eqref{eqn_alg1}})\\
&={\beta_k A_k^2\over B_k^2}={A^2_{k}\over B^2_{k+1}} \quad && (\text{from
~\eqref{eqn_alg3} and \eqref{eqn_thmproof1_5}}),
\end{alignat*}
so we obtain by multiplying both sides of this expression by
$B_{k+1}^2$ and using $A_{k+1}\geq A_k$ that
\[
  {1\over m} A_{k+1}B_{k+1}= A_{k+1}^2-A_k^2
=(A_{k+1}+A_k)(A_{k+1}-A_k)\leq
  2A_{k+1}(A_{k+1}-A_k)
\]
and therefore
\begin{equation}
  \label{eqn_thmproofa}
A_{k+1}\geq A_k+{B_{k+1}\over 2m} \ge A_k + \frac{B_k}{2m}.
\end{equation}
By combining the inequalities~\eqref{eqn_thmproofb} and
\eqref{eqn_thmproofa} and applying a recursive argument, we can
estimate $A_{k+1}$ and $B_{k+1}$ as follows:
\[
  \left[
    \begin{array}{cc}
      A_{k+1} \\
      B_{k+1}
    \end{array}
  \right] \geq
  \left[
        \begin{array}{cc}
        1 & {1\over 2m} \\
        {\mumin\over 2m} & 1 \\
        \end{array}
    \right]^{k+1}
    \left[
      \begin{array}{c}
        A_0 \\
        B_0 \\
      \end{array}
    \right].
\]
The Jordan decomposition of the matrix in this expression is
\[
\left[
        \begin{array}{cc}
        1 & {1\over 2m} \\
        {\mumin\over 2m} & 1 \\
        \end{array}
    \right] =
\left[ \begin{matrix} 1 & 1 \\ \sqrt{\mumin} & -\sqrt{\mumin} \end{matrix} \right]^{-1}
\left[ \begin{matrix} \sigma_1 & 0 \\ 0 & \sigma_2 \end{matrix} \right]
\left[ \begin{matrix} 1 & 1 \\ \sqrt{\mumin} & -\sqrt{\mumin} \end{matrix} \right],
\]
with
\[
\sigma_1=1+ {\sqrt{\mumin}\over 2m}, \quad
\sigma_2=1-{\sqrt{\mumin}\over 2m}.
\]
Thus we have
\begin{align*}
\left[ \begin{matrix} 1 & \frac{1}{2m} \\ \frac{\mumin}{2m} & 1 \end{matrix}
\right]^{k+1}
&=
\frac12
\left[ \begin{matrix} 1 & \frac{1}{\sqrt{\mumin}} \\
1 & -\frac{1}{\sqrt{\mumin}} \end{matrix} \right]
\left[ \begin{matrix} \sigma_1^{k+1} & 0 \\
0 & \sigma_2^{k+1} \end{matrix} \right]
    \left[
        \begin{array}{cc}
        1 & 1 \\
        \sqrt{\mumin} & -\sqrt{\mumin} \\
        \end{array}
    \right] \\
 &=\frac12 \left[
  \begin{array}{cc}
    \sigma_1^{k+1}+\sigma_2^{k+1} & (\sigma_1^{k+1}-\sigma_2^{k+1})/\sqrt{\mumin}  \\
    (\sigma_1^{k+1}-\sigma_2^{k+1})\sqrt{\mumin} & \sigma_1^{k+1}+\sigma_2^{k+1} \\
  \end{array}
\right]
\end{align*}
which implies $A_{k+1}\geq
B_0 (\sigma_1^{k+1}-\sigma_2^{k+1}) / (2\sqrt{\mumin})$ and
$B_{k+1}\geq (\sigma_1^{k+1}+\sigma_2^{k+1})B_0/2$.
By combining these bounds with
\eqref{eqn_thmproof4}, we obtain
\begin{align*}
\mathbb{E} (r_{k+1}^2) &
= \mathbb{E} (\|v_{k+1}-x^*\|_{(A^TA)^+}^2)
\le \frac{B_0^2}{B_{k+1}^2} r_0^2
\le \frac{4 \|x_0-x^*\|_{(A^TA)^+}^2}{(\sigma_1^{k+1} + \sigma_2^{k+1})^2},\\
\mathbb{E} (\|x_{k+1}-x^*\|^2) & \le \frac{B_0^2}{A_{k+1}^2} r_0^2
\le \frac{4 \mumin \|x_0-x^*\|_{(A^TA)^+}^2}{(\sigma_1^{k+1} - \sigma_2^{k+1})^2},
\end{align*}
completing the proof.
\end{proof}

\end{document}